\documentclass[12pt,english]{article}
\usepackage{color}
\usepackage{babel}
\usepackage{units}
\usepackage{mathrsfs}
\usepackage{amsmath}
\usepackage{amsthm}
\usepackage{amssymb}
\usepackage[unicode=true,
 bookmarks=true,bookmarksnumbered=true,bookmarksopen=false,
 breaklinks=false,pdfborder={0 0 1},backref=false,colorlinks=true]
 {hyperref}
\hypersetup{pdftitle={Fractional kinetic in spatial ecological model},
 pdfauthor={{Jos{\'e}Lu{\'\i}sdaSilva},
 pdfsubject={General Fractional calculus and asymptotic of traveling waves},
 pdfkeywords={Bolker-Pacala model, General fractional derivative, Distributed order derivative, Subordination principle},
 },linkcolor=blue,citecolor=blue,urlcolor=blue,filecolor=blue}

\makeatletter
\theoremstyle{plain}
\newtheorem{thm}{\protect\theoremname}
\theoremstyle{remark}
\newtheorem{rem}[thm]{\protect\remarkname}
\theoremstyle{plain}
\newtheorem{prop}[thm]{\protect\propositionname}
\theoremstyle{plain}
\newtheorem{lem}[thm]{\protect\lemmaname}
\theoremstyle{definition}
\newtheorem{example}[thm]{\protect\examplename}
\theoremstyle{definition}
\newtheorem{defn}[thm]{\protect\definitionname}

\usepackage{babel}

\usepackage{babel}
\usepackage{mathrsfs}
\numberwithin{equation}{section}

\providecommand{\definitionname}{Definition}
\providecommand{\examplename}{Example}
\providecommand{\lemmaname}{Lemma}
\providecommand{\propositionname}{Proposition}
\providecommand{\remarkname}{Remark}
\providecommand{\theoremname}{Theorem}

\makeatother

\providecommand{\definitionname}{Definition}
\providecommand{\examplename}{Example}
\providecommand{\lemmaname}{Lemma}
\providecommand{\propositionname}{Proposition}
\providecommand{\remarkname}{Remark}
\providecommand{\theoremname}{Theorem}

\begin{document}
\title{From Random Times to Fractional Kinetics}
\author{\textbf{Anatoly N.~Kochubei}\\
 Institute of Mathematics,\\
 National Academy of Sciences of Ukraine, \\
 Tereshchenkivska 3, \\
 Kyiv, 01601 Ukraine\\
 Email: kochubei@imath.kiev.ua\and \textbf{Yuri Kondratiev}\\
 Department of Mathematics, University of Bielefeld, \\
 D-33615 Bielefeld, Germany,\\
 Dragomanov University, Kiev, Ukraine,\\
 Email: kondrat@math.uni-bielefeld.de\and\textbf{Jos{\'e} Lu{\'\i}s
da Silva},\\
 CIMA, University of Madeira, Campus da Penteada,\\
 9020-105 Funchal, Portugal.\\
 Email: joses@staff.uma.pt}
\maketitle
\begin{abstract}
In this paper we study the effect of the subordination by a general
random time-change to the solution of a model on spatial ecology in
terms of its evolution density. In particular on traveling waves for
a non-local spatial logistic equation. We study the Cesaro limit of
the subordinated dynamics in a number of particular cases related
to the considered fractional derivative making use of the Karamata-Tauberian
theorem.

\noindent \textbf{Keywords} Bernstein functions, general fractional
derivative, configuration space; Karamata-Tauberian theorem, subordination
principle, traveling waves.  
\end{abstract}

\section{Introduction}

A study of a random time change in a Markov process $X_{t}$ was initiated
by S.\ Bochner \cite{Bochner1955} by considering an independent
Markov random time $\xi_{t}$. The resulting process $Y_{t}=X_{\xi_{t}}$
is again Markov and is called a subordinated process. In a pioneering
work \cite{Kolsrud}, T.\ Kolsrud initiated the study of the general
independent random time process. Later the concept of a random time
change became an effective tool in the study of physical phenomena
related to relaxation and diffusion problems in complex systems. We
refer here to the section "Historical notes"
in \cite{Mura08}. An additional essential motivation for the random
time change did appear in applications to biological models. The point
is such that there exists a notion of \emph{biological time} specific
for each particular type of biological system and which is very different
compared to the usual time scale employed in physics. One of the possibilities
to incorporate this notion is related to a random time change. Moreover,
this approach gives the chance to include in the model an effective
influence of dynamical random environment in which our system in located.

An especially interesting situation appears for the case of an inverse
subordinator $\xi_{t}$. We will describe this framework roughly and
leaving the details and more precise formulations in Subsection\ \ref{subsec:Ecological-model}
below. The marginal distributions $\mu_{t}$ of a Markov process $X_{t}$
describe an evolution of states in the considered systems and deliver
an essential information for the study of the dynamics. We call that
the statistical dynamics in contrast to the stochastic dynamics $X_{t}$
which contains more detailed information about the evolution of the
system. The statistical dynamics may be formulated by means of the
Fokker-Planck-Kolmogorov (FPK) evolution equation (weak sense) 
\[
\frac{\partial\mu_{t}}{\partial t}=L^{\ast}\mu_{t},
\]
where $L^{\ast}$ is the (dual) generator of the Markov process on
states. For an inverse subordinator $\xi_{t}$ and the process $Y_{t}=X_{\xi_{t}}$,
denote $\nu_{t}$ the corresponding marginal distributions. The key
observation is such that the dynamics of $\nu_{t}$ is described by
the evolution equation 
\[
\mathbb{D}_{t}^{\xi}\nu_{t}=L^{\ast}\nu_{t},
\]
where $\mathbb{D}_{t}^{\xi}$ denotes a generalized (convolutional)
fractional derivative in time canonically associated with $\xi_{t}$.
This fractional Fokker-Planck-Kolmogorov equation (FFPK) gives the
main technical instrument for the study of subordinated statistical
dynamics. There is a well known particular case of the inverse to
stable subordinators. In this case, such standard objects appear as
Caputo-Djrbashian fractional derivatives, and all the well developed
techniques of fractional calculus work perfectly. But in the case
of general inverse subordinators we should think about proper subclasses,
for which certain analytic properties of the related objects may be
established, see e.g.\ \cite{Chen2018}.

Note that there exist two possible points of view. We can start from
an inverse subordinator and arrive in FFPK equation \cite{Meerschaert2006,Meerschaert2008,Kolokoltsov2009,Toaldo2015}.
Or, vice versa, we develop at first a notion of generalized fractional
derivative and then search for a probabilistic interpretation of the
solution. The latter was first realized in \cite{Kochubei11}. For
a detailed discussion of both possibilities see \cite{Chen2018}.

The aim of this paper is to analyze the effects of random time changes
on Markov dynamics for certain models of interacting particle systems
in the continuum. For the concreteness, we will consider the important
Bolker-Pacala model in the spatial ecology that is a particular case
of general birth-and-death processes in the continuum. The scheme
of our study is the following. The FPK equation for the states of
the model may be reformulated in terms of a hierarchical evolution
of correlation functions. In a kinetic scaling limit this system of
equations leads to a kinetic hierarchy for correlation functions and
to a non-linear evolution equation for the density of the system.
The latter is a Vlasov-type non-linear and non-local evolution equation.
Considering a random time change by an inverse subordinator we arrive
in the FFPK equation for correlation functions and to a fractional
kinetic hierarchy. For a discussion of this approach and certain new
properties of fractional kinetic hierarchy see \cite{KocKon2017}.

A surprising feature appearing in the kinetic hierarchy is related
to the dynamics of the density of the considered systems. The evolution
of the density in the time changed kinetics is not the solution of
a related Vlasov-type equation with a fractional time derivative as
one may expect. In reality, this dynamics is a subordination of the
solution to kinetic equation for the hierarchy in the initial physical
time. A particular problem which does appear in this situation is
related with an effect of the subordination for such special solutions
as traveling waves which are known for the Bolker-Pacala model. In
the special case of stable subordinators this question was studied
in \cite{Da Silva2018}. In the present paper we are dealing with
certain classes of inverse subordinators for which the analysis of
subordinated waves may be carried out.

We summarize our observation as follows: a heuristic consideration
of a kinetic equation for the density with a fractional time derivative
has no relation to the real dynamics in the kinetic limit of the time
changed Markov evolution of the model. The correct behavior is given
by a subordination of the solution to the kinetic equation in physical
time.

\section{Preliminaries}

\subsection{General Facts and Notation}

Let $\mathcal{B}(\mathbb{R}^{d})$ be the family of all Borel sets
in $\mathbb{R}^{d}$, $d\geq1$ and let $\mathcal{B}_{b}(\mathbb{R}^{d})$
denote the system of all bounded sets in $\mathcal{B}(\mathbb{R}^{d})$.

The space of $n$-point configurations in an arbitrary $Y\in\mathcal{B}(\mathbb{R}^{d})$
is defined by 
\[
\Gamma^{(n)}(Y):=\big\{\eta\subset Y\big|\;|\eta|=n\big\},\quad n\in\mathbb{N},
\]
where $|\cdot|$ the cardinality of a finite set. We also set $\Gamma^{(0)}(Y):=\{\emptyset\}$.
As a set, $\Gamma^{(n)}(Y)$ may be identified with the symmetrization
of 
\[
\widetilde{Y^{n}}=\big\{(x_{1},\ldots,x_{n})\in Y^{n}\big|x_{k}\neq x_{l}\text{ if }k\neq l\big\}.
\]

The configuration space over the space $\mathbb{R}^{d}$ consists
of all locally finite subsets (configurations) of $\mathbb{R}^{d}$,
namely, 
\begin{equation}
\Gamma=\Gamma(\mathbb{R}^{d}):=\big\{\gamma\subset\mathbb{R}^{d}\,\big|\,|\gamma\cap\Lambda|<\infty,\text{ for all }\Lambda\in\mathcal{B}_{b}(\mathbb{R}^{d})\big\}.\label{eq:conf_space}
\end{equation}
The space $\Gamma$ is equipped with the vague topology, i.e., the
minimal topology for which all mappings $\Gamma\ni\gamma\mapsto\sum_{x\in\gamma}f(x)\in\mathbb{R}$
are continuous for any continuous function $f$ on $\mathbb{R}^{d}$
with compact support. Note that the summation in $\sum_{x\in\gamma}f(x)$
is taken over only finitely many points of $\gamma$ belonging to
the support of $f$. It was shown in \cite{Kondratiev2006} that with
the vague topology $\Gamma$ may be metrizable and it becomes a Polish
space (i.e., a complete separable metric space). Corresponding to
this topology, the Borel $\sigma$-algebra $\mathcal{B}(\Gamma)$
is the smallest $\sigma$-algebra for which all mappings 
\[
\Gamma\ni\gamma\mapsto|\gamma_{\Lambda}|\in\mathbb{N}_{0}:=\mathbb{N}\cup\{0\}
\]
are measurable for any $\Lambda\in\mathcal{B}_{b}(\mathbb{R}^{d})$.
Here $\gamma_{\Lambda}:=\gamma\cap\Lambda$.

It follows that one can introduce the corresponding Borel $\sigma$-algebra
on $\Gamma^{(n)}(Y)$, which we denote by $\mathcal{B}(\Gamma^{(n)}(Y))$.
The space of finite configurations in an arbitrary $Y\in\mathcal{B}(\mathbb{R}^{d})$
is defined by 
\[
\Gamma_{0}(Y):=\bigsqcup_{n\in\mathbb{N}_{0}}\Gamma^{(n)}(Y).
\]
This space is equipped with the topology of disjoint unions. Therefore
one can introduce the corresponding Borel $\sigma$-algebra $\mathcal{B}(\Gamma_{0}(Y))$.
In the case of $Y=\mathbb{R}^{d}$ we will omit $Y$ in the notation,
thus $\Gamma_{0}:=\Gamma_{0}(\mathbb{R}^{d})$ and $\Gamma^{(n)}:=\Gamma^{(n)}(\mathbb{R}^{d})$.

The restriction of the Lebesgue product measure $(dx)^{n}$ to $\bigl(\Gamma^{(n)},\mathcal{B}(\Gamma^{(n)})\bigr)$
will be denoted by $m^{(n)}$, and we set $m^{(0)}:=\delta_{\{\emptyset\}}$.
The Lebesgue--Poisson measure $\lambda$ on $\Gamma_{0}$ is defined
by 
\begin{equation}
\lambda:=\sum_{n=0}^{\infty}\frac{1}{n!}m^{(n)}.\label{eq:lp_measure}
\end{equation}
For any $\Lambda\in\mathcal{B}_{b}(\mathbb{R}^{d})$, the restriction
of $\lambda$ to $\Gamma(\Lambda):=\Gamma_{0}(\Lambda)$ will be also
denoted by $\lambda$. The space $\bigl(\Gamma,\mathcal{B}(\Gamma)\bigr)$
is the projective limit of the family of spaces $\big\{(\Gamma(\Lambda),\mathcal{B}(\Gamma(\Lambda)))\big\}_{\Lambda\in\mathcal{B}_{b}(\mathbb{R}^{d})}$.
The Poisson measure $\pi$ on $\bigl(\Gamma,\mathcal{B}(\Gamma)\bigr)$
is given as the projective limit of the family of measures $\{\pi^{\Lambda}\}_{\Lambda\in\mathcal{B}_{b}(\mathbb{R}^{d})}$,
where $\pi^{\Lambda}:=e^{-m(\Lambda)}\lambda$ is the probability
measure on $\bigl(\Gamma(\Lambda),\mathcal{B}(\Gamma(\Lambda))\bigr)$.
Here $m(\Lambda)$ is the Lebesgue measure of $\Lambda\in\mathcal{B}_{b}(\mathbb{R}^{d})$.

For any measurable function $f:\mathbb{R}^{d}\rightarrow\mathbb{R}$
we define the \emph{Lebesgue--Poisson exponent} 
\begin{equation}
e_{\lambda}(f,\eta):=\prod_{x\in\eta}f(x),\quad\eta\in\Gamma_{0};\qquad e_{\lambda}(f,\emptyset):=1.\label{eq:lp-exponent}
\end{equation}
Then, by (\ref{eq:lp_measure}), for $f\in L^{1}(\mathbb{R}^{d},dx)$
we obtain $e_{\lambda}(f)\in L^{1}(\Gamma_{0},d\lambda)$ and 
\begin{equation}
\int_{\Gamma_{0}}e_{\lambda}(f,\eta)\,d\lambda(\eta)=\exp\left(\int_{\mathbb{R}^{d}}f(x)\,dx\right).\label{LP-exp-mean}
\end{equation}

A set $M\in\mathcal{B}(\Gamma_{0})$ is called bounded if there exists
$\Lambda\in\mathcal{B}_{b}(\mathbb{R}^{d}))$ and $N\in\mathbb{N}$
such that $M\subset\bigsqcup_{n=0}^{N}\Gamma^{(n)}(\Lambda)$. We
will make use of the following classes of functions on $\Gamma_{0}$:
(i) $L_{\mathrm{ls}}^{0}(\Gamma_{0})$ is the set of all measurable
functions on $\Gamma_{0}$ which have local support, i.e., $H\in L_{ls}^{0}(\Gamma_{0})$,
if there exists $\Lambda\in\mathcal{B}_{b}(\mathbb{R}^{d})$ such
that $H\upharpoonright_{\Gamma_{0}\setminus\Gamma(\Lambda)}=0$, while
(ii) $B_{bs}(\Gamma_{0})$ is the set of bounded measurable functions
with bounded support, i.e., $H\upharpoonright_{\Gamma_{0}\setminus B}=0$
for some bounded $B\in\mathcal{B}(\Gamma_{0})$.

In fact, any $\mathcal{B}(\Gamma_{0})$-measurable function $H$ on
$\Gamma_{0}$ is a sequence of functions $\bigl\{ H^{(n)}\bigr\}_{n\in\mathbb{N}_{0}}$,
where $H^{(n)}$ is a $\mathcal{B}(\Gamma^{(n)})$-measurable function
on $\Gamma^{(n)}$.

On $\Gamma$ we consider the set of cylinder functions $\mathcal{F}_{cyl}(\Gamma)$.
These functions are characterized by the relation $F(\gamma)=F\upharpoonright_{\Gamma_{\Lambda}}(\gamma_{\Lambda})$.

The following mapping from $L_{ls}^{0}(\Gamma_{0})$ into $\mathcal{F}_{cyl}(\Gamma)$
plays a key role in our further considerations: 
\begin{equation}
KH(\gamma):=\sum_{\eta\Subset\gamma}H(\eta),\quad\gamma\in\Gamma,\label{eq:k-transform}
\end{equation}
where $H\in L_{ls}^{0}(\Gamma_{0})$. See, for example, \cite{KK99}
and references therein for more details. The summation in (\ref{eq:k-transform})
is taken over all finite sub-configurations $\eta\in\Gamma_{0}$ of
the (infinite) configuration $\gamma\in\Gamma$; this relationship
is represented symbolically by $\eta\Subset\gamma$. The mapping $K$
is linear, positivity preserving, and invertible, with 
\begin{equation}
K^{-1}F(\eta):=\sum_{\xi\subset\eta}(-1)^{|\eta\setminus\xi|}F(\xi),\quad\eta\in\Gamma_{0}.\label{k-1trans}
\end{equation}
Here and in the sequel, inclusions like $\xi\subset\eta$ hold for
$\xi=\emptyset$ as well as for $\xi=\eta$. We denote the restriction
of $K$ onto functions on $\Gamma_{0}$ by $K_{0}$.

A probability measure $\mu\in\mathcal{M}_{fm}^{1}(\Gamma)$ is called
locally absolutely continuous with respect to (w.r.t.) a Poisson measure
$\pi$ if for any $\Lambda\in\mathcal{B}_{b}(\mathbb{R}^{d})$ the
projection of $\mu$ onto $\Gamma(\Lambda)$ is absolutely continuous
w.r.t.~the projection of $\pi$ onto $\Gamma(\Lambda)$. By \cite{KK99},
there exists in this case a \emph{correlation functional} $\varkappa_{\mu}:\Gamma_{0}\rightarrow\mathbb{R}_{+}$
such that the following equality holds for any $H\in B_{bs}(\Gamma_{0})$:
\begin{equation}
\int_{\Gamma}(KH)(\gamma)\,d\mu(\gamma)=\int_{\Gamma_{0}}H(\eta)\varkappa_{\mu}(\eta)\,d\lambda(\eta).\label{eq:corr_funct}
\end{equation}
Restrictions $\varkappa_{\mu}^{(n)}$ of this functional on $\Gamma_{0}^{(n)}$,
$n\in\mathbb{N}_{0}$, are called \emph{correlation functions} of
the measure $\mu$. Note that $\varkappa_{\mu}^{(0)}=1$.

\subsection{Microscopic Spatial Ecological Model}

\label{subsec:Ecological-model}Let us consider the spatial ecological
model a.k.a.\ the Bolker-Pacala model, for the introduction and detailed
study of this model see \cite{BP1997,FM2004,FKK10,FKK12,FKKK15}.
Below we formulate certain results from these papers concerning the
Markov dynamics and mesoscopic scaling in the Bolker-Pacala model.

The heuristic generator $L$ in this model is defined on a space of
functions over the configuration space

\begin{align}
(LF)(\gamma)= & \sum_{x\in\gamma}\Bigl(m+\sum_{y\in\gamma\setminus x}a^{-}(x-y)\Bigr)[F(\gamma\setminus x)-F(\gamma)]\nonumber \\
 & +\sum_{x\in\gamma}\int_{\mathbb{R}^{d}}a^{+}(x-y)[F(\gamma\cup y)-F(\gamma)]dy.\label{eq:Bolker-Pakala-op}
\end{align}
Here $m>0$ is the mortality rate, $a^{-}$ and $a^{+}$ are competition
and dispersion kernels, respectively. See Section \ref{sec:traveling-waves}
for the conditions on these kernels in the present paper.

We assume that the initial distribution in our model is a probability
measure $\mu_{0}\in\mathcal{M}^{1}(\Gamma)$ and the corresponding
sequence of correlation functions $\varkappa_{0}=(\varkappa_{0}^{(n)})_{n=0}^{\infty}$,
see e.g.\ \cite{KK99}. Then the evolution of the model at time $t>0$
is the measure $\mu_{t}\in\mathcal{M}^{1}(\Gamma)$, and $\varkappa_{t}=(\varkappa_{t}^{(n)})_{n=0}^{\infty}$
its correlation functions. If the evolution of states $(\mu_{t})_{t\geq0}$
is determined by the heuristic Markov generator $L$, then $\mu_{t}$
is the solution of the forward Kolmogorov equation (or Fokker-Plank
equation FPE), 
\begin{equation}
\begin{cases}
\frac{\partial\mu_{t}}{\partial t} & =L^{*}\mu_{t}\\
\mu_{t}|_{t=0} & =\mu_{0},
\end{cases}\label{eq:FPe}
\end{equation}
where $L^{*}$ is the adjoint operator of $L$. In terms of the time-dependent
correlation functions $(\varkappa_{t})_{t\geq0}$ corresponding to
$(\mu_{t})_{t\geq0}$, the FPE may be rewritten as an infinite system
of evolution equations 
\begin{equation}
\begin{cases}
\frac{\partial\varkappa_{t}^{(n)}}{\partial t} & =(L^{\triangle}\varkappa_{t})^{(n)}\\
\varkappa_{t}^{(n)}|_{t=0} & =\varkappa_{0}^{(n)},\qquad n\geq0,
\end{cases}\label{eq:hierarchy}
\end{equation}
where $L^{\triangle}$ is the image of $L^{*}$ in a space of vector-functions
$\varkappa_{t}=(\varkappa_{t}^{(n)})_{n=0}^{\infty}$. The expression
for the operator $L^{\triangle}$ is obtained from the operator $L$
via combinatoric calculations (cf.~\cite{KK99}).

The evolution equation (\ref{eq:hierarchy}) is nothing but a hierarchical
system of equations corresponding to the Markov generator $L$. This
system is the analogue of the BBGKY-hierarchy of the Hamiltonian dynamics,
see \cite{Bo62}.

We are interested in the Vlasov-type scaling of stochastic dynamics
which leads to the so-called kinetic description of the considered
model. In the language of theoretical physics we are dealing with
a mean-field type scaling which is adopted to preserve the spatial
structure. In addition, this scaling will lead to the limiting hierarchy,
which possesses a chaos preservation property. In other words, if
the initial distribution is Poisson (non-homogeneous) then the time
evolution of states will maintain this property. We refer to \cite{FKK10}
for a general approach, other examples, and additional references.

There exists a standard procedure for deriving the Vlasov scaling
from the generator in (\ref{eq:hierarchy}). The specific type of
scaling is dictated by the model in question. The process leading
from $L^{\triangle}$ to the rescaled Vlasov operator $L_{V}^{\triangle}$
produces a non-Markovian generator $L_{V}$ since the positivity-preserving
property fails. Therefore instead of (\ref{eq:FPe}) we consider the
following kinetic FPE, 
\begin{equation}
\begin{cases}
\frac{\partial\mu_{t}}{\partial t} & =L_{V}^{*}\mu_{t}\\
\mu_{t}|_{t=0} & =\mu_{0},
\end{cases}\label{eq:FPe1}
\end{equation}
and observe that if the initial distribution satisfies $\mu_{0}=\pi_{\rho_{0}}$,
then the solution is of the same type, i.e., $\mu_{t}=\pi_{\rho_{t}}$,
$t>0$.

In terms of correlation functions, the kinetic FPE (\ref{eq:FPe1})
gives rise to the following Vlasov-type hierarchical chain (Vlasov
hierarchy) 
\begin{equation}
\begin{cases}
\frac{\partial\varkappa_{t}^{(n)}}{\partial t} & =(L_{V}^{\triangle}\varkappa_{t})^{(n)}\\
\varkappa_{t}^{(n)}|_{t=0} & =\varkappa_{0}^{(n)},\qquad n\geq0.
\end{cases}\label{eq:vlasov_hierarchy}
\end{equation}
This evolution of correlations functions exists in a scale of Banach
spaces, cf.\ \cite{FKK12}.

Let us consider the Lebesgue-Poisson exponent, defined in \eqref{eq:lp-exponent}
\[
\varkappa_{0}(\eta)=e_{\lambda}(\rho_{0},\eta)=\prod_{x\in\eta}\rho_{0}(x),\quad\eta\in\Gamma_{0}
\]
as the initial condition. Such correlation functions correspond to
the Poisson measures $\pi_{\rho_{0}}$ on $\Gamma$ with the density
$\rho_{0}$. The scaling $L_{V}^{\triangle}$ should be such that
the dynamics $\varkappa_{0}\mapsto\varkappa_{t}$ preserves this structure,
or more precisely, $\varkappa_{t}$ should be of the same type 
\begin{equation}
\varkappa_{t}(\eta)=e_{\lambda}(\rho_{t},\eta)=\prod_{x\in\eta}\rho_{t}(x),\quad\eta\in\Gamma_{0}.\label{eq:chaotic_preservation}
\end{equation}
The relation (\ref{eq:chaotic_preservation}) is known as the \emph{chaos
propagation property} of the Vlasov hierarchy. Under certain assumptions
on the mortality $m$ and the kernels $a^{\pm}$, the density $\rho_{t}$
corresponding to the spatial ecologic model, the equation (\ref{eq:chaotic_preservation})
implies, in general a non-linear differential equation for $\rho_{t}$,
$x\in\mathbb{R}^{d}$, 
\begin{equation}
\dfrac{\partial\rho_{t}(x)}{\partial t}=\bigl(a^{+}*\rho_{t}\bigr)(x)-m\rho_{t}(x)-\rho_{t}(x)\bigl(a^{-}*\rho_{t}\bigr)(x),\quad\rho_{t}(x)|_{t=0}=\rho_{0}(x),\label{eq:BPM-density-ode}
\end{equation}
where the initial condition $\rho_{0}$ is a bounded function. Equation
\eqref{eq:BPM-density-ode} is called \emph{Vlasov-type kinetic equation}
for $\rho_{t}$, see \cite{FKK10}, \cite{FKKK15} and references
therein for more details and \cite{Sornette2006} for important applications
of this model in various areas of science.

In general, if one does not start with a Poisson measure, the solution
will leave the space $\mathcal{M}^{1}(\Gamma)$. To have a bigger
class of initial measures, we may consider the cone inside $\mathcal{M}^{1}(\Gamma)$
generated by convex combinations of Poisson measures, denoted by $\mathbb{P}(\Gamma)$. 
\begin{rem}
Below we discuss the concept of a fractional Fokker-Plank equation
and the related fractional statistical dynamics, which is still an
evolution in the space of probability measures $\mathcal{M}^{1}(\Gamma)$
on the configuration space $\Gamma$. The mesoscopic scaling of this
evolutions leads to a fractional kinetic FPE. The subordination principle
provides the representation of the solution to this equation as a
flow of measures that is a transformation of a Poisson flow for the
initial kinetic FPE, see Sections \ref{sec:Solution-Evol-Eq} and
\ref{sec:sub-moving-step} below. 
\end{rem}

Let $X=\{X_{t},\;t\ge0\}$ be the Markov process with generator $L$
given in \eqref{eq:Bolker-Pakala-op}. Denote by $S=\{S_{t},\;t\ge0\}$
the subordinator, independent of $X$, with Laplace exponent $\mathcal{L}(p):=p\mathcal{K}(p)$,
$p\ge0$, that is 
\[
\mathbb{E}\big(e^{-pS_{t}}\big)=e^{-tp\mathcal{K}(p)},\quad p\ge0.
\]
The inverse subordinator $E=\{E_{t},\;t\ge0\}$ (also called the first
hitting time process for the subordinator $S$) is defined by 
\[
E_{t}:=\inf\{s>0:S_{s}>t\},\quad t\ge0
\]
and its density we denote by $\varrho_{t}(\tau)$, that is 
\[
\varrho_{t}(\tau)d\tau=\partial_{\tau}P(E_{t}\le\tau)=\partial_{\tau}P(S_{\tau}\ge t)=-\partial_{\tau}P(S_{\tau}<t).
\]
Then the subordination process $Y_{t}:=X_{E_{t}}$, $t\ge0$ is such
that the one-dimensional distribution $\nu_{t}$ is given by 
\[
\nu_{t}(d\gamma)=\int_{0}^{\infty}\varrho_{t}(\tau)\mu_{\tau}(d\gamma)\,d\tau.
\]
The $t$-Laplace transform of the density $\varrho_{t}(\tau)$ is
equal to 
\[
\int_{0}^{\infty}e^{-ps}\varrho_{s}(\tau)\,ds=\mathcal{K}(p)e^{-\tau p\mathcal{K}(p)}.
\]
Let $k$ be the kernel defined by $\mathcal{K}$ as its Laplace transform
\[
\mathcal{K}(p)=\int_{0}^{\infty}e^{-pt}k(t)\,dt.
\]
With the help of the kernel $k$ we define the general fractional
derivative (GFD) developed in \cite{Kochubei11} which plays a basic
role in this paper 
\begin{equation}
(\mathbb{D}_{t}^{(k)}f)(t):=\frac{d}{dt}\int_{0}^{t}k(t-s)\big(f(s)-f(0)\big)\,ds,\;t>0.\label{eq:GFD}
\end{equation}
In Subsection\ \ref{subsec:GFD} we study in more details the derivative
$\mathbb{D}_{t}^{(k)}$. The natural question about the subordination
process $Y$ is: What type of ``differential'' equation does the
distribution $\nu_{t}$ of $Y_{t}$ satisfies? The answer is as follows:
The distribution $\nu_{t}$ satisfies the following GFD equation 
\[
(\mathbb{D}_{t}^{(k)}\nu_{t})(t)=L\nu_{t},\quad t>0.
\]

As a result we shall consider the fractional Fokker-Plank equation
with the GFD \eqref{eq:GFD} 
\[
\begin{cases}
\mathbb{D}_{t}^{(k)}\mu_{t,k} & =L_{V}^{\triangle}\mu_{t,k}\\
\mu_{t,k}|_{t=0} & =\mu_{0,k}.
\end{cases}
\]
The corresponding evolutions of the correlation functions for the
Vlasov scaling is 
\[
\begin{cases}
\mathbb{D}_{t}^{(k)}\varkappa_{t,k} & =L_{V}^{\triangle}\varkappa_{t,k}\\
\varkappa_{t,k}|_{t=0} & =\varkappa_{0,k}
\end{cases}
\]
which is a non-Markov evolution. We would like to study some properties
of the evolution $\varkappa_{t,k}$. The general subordination principle
gives 
\begin{equation}
\varkappa_{t,k}(\eta)=\int_{0}^{\infty}\varrho_{t}(\tau)\varkappa_{\tau}(\eta)\,d\tau,\quad\eta\in\Gamma_{0},\label{eq:general_subord}
\end{equation}
which is a relation to all orders of the correlation functions. The
kernel $\varrho_{t}$ and its properties are studied in Section\ \ref{sec:Solution-Evol-Eq}
below. In particular, the density of ``particles'' is given by 
\[
\rho_{t}^{k}(x)=\varkappa_{t,k}^{(1)}(x),\quad x\in\mathbb{R}^{d}.
\]
The general subordination principle \eqref{eq:general_subord} gives
\begin{equation}
\rho_{t}^{k}(x)=\int_{0}^{\infty}\varrho_{t}(\tau)\varkappa_{\tau}^{(1)}(x)\,d\tau=\int_{0}^{\infty}\varrho_{t}(\tau)\rho_{\tau}(x)\,d\tau,\quad x\in\mathbb{R}^{d}.\label{eq:subord_density}
\end{equation}
From this representation we should be able to derive an effect of
the fractional derivative onto the evolution of the density, see Sections
\ref{sec:sub-moving-step} and \ref{sec:traveling-waves}. 
\begin{rem}
Certain heuristic motivations in physics are leading to the following
non-linear equation with fractional time derivative: 
\begin{equation}
\mathbb{D}_{t}^{(k)}\rho_{t}(x)=\bigl(a^{+}*\rho_{t}\bigr)(x)-m\rho_{t}(x)-\rho_{t}(x)\bigl(a^{-}*\rho_{t}\bigr)(x).\label{eq:FBPM-density}
\end{equation}
Note that the subordinated density dynamics has no relation with the
solution to this equation. Both evolutions will coincide only in the
case of a linear operator in the right hand side. 
\end{rem}

It is reasonable to study the properties of subordinated flows in
\eqref{eq:subord_density} from a more general point of view, when
the evolution of densities $\rho_{t}(x)$ is not necessarily related
to a particular Vlasov-type kinetic equation, this is realized in
Sections \ref{sec:sub-moving-step} and \ref{sec:traveling-waves}
below.

\subsection{General Fractional Derivative}

\label{subsec:GFD}

\subsubsection{Definitions}

Motivated by the preceding considerations, we recall the general concept
of fractional derivative developed in \cite{Kochubei11} which plays
a basic role in this paper. The basic ingredient of the theory of
evolution equations, \cite{KST2006,Eidelman2004} is to consider,
instead of the first time derivative, the Caputo-Djrba\-shian fractional
derivative of order $\alpha\in(0,1)$ 
\begin{equation}
\big(\mathbb{D}_{t}^{(\alpha)}u\big)(t)=\frac{d}{dt}\int_{0}^{t}k(t-\tau)u(\tau)\,d\tau-k(t)u(0),\quad t>0,\label{eq:Caputo-derivative}
\end{equation}
where 
\[
k(t)=\frac{t^{-\alpha}}{\Gamma(1-\alpha)},\;t>0.
\]
Further details on fractional calculus may be found in \cite{KST2006}
and references therein.

More generally, it is natural to consider differential-convolution
operators 
\begin{equation}
\big(\mathbb{D}_{t}^{(k)}u\big)(t)=\frac{d}{dt}\int_{0}^{t}k(t-\tau)u(\tau)\,d\tau-k(t)u(0),\;t>0,\label{eq:general-derivative}
\end{equation}
where $k\in L_{\mathrm{loc}}^{1}(\mathbb{R}_{+})$ is a nonnegative
kernel. As an example of such operator, we consider the distributed
order derivative $\mathbb{D}_{t}^{(\mu)}$ corresponding to 
\begin{equation}
k(t)=\int_{0}^{1}\frac{t^{-\alpha}}{\Gamma(1-\alpha)}\mu(\alpha)\,d\alpha,\quad t>0,\label{eq:distributed-kernel}
\end{equation}
where $\mu(\alpha)$, $0\le\alpha\le1$ is a positive weight function
on $[0,1]$, see \cite{Atanackovic2009,Daftardar-Gejji2008,Hanyga2007,Kochubei2008,Kochubei2008a,Gorenflo2005,Meerschaert2006}. 
\begin{rem}
\label{rem:general-derivative}

\begin{enumerate}
\item The Caputo-Djrbashian fractional derivative \eqref{eq:Caputo-derivative}
are widely used in physics, see \cite{MK1,Metzler94,Mainardi2010},
for modeling slow relaxation and diffusion processes. In this case
the power-like decay of the mean square displacement of a diffusive
particle appears instead of the classical exponential decay. 
\item Equations with the distributed order operators \eqref{eq:general-derivative}-\eqref{eq:distributed-kernel}
describe ultraslow processes with logarithmic decay, see \cite{Kochubei11,Meerschaert2006}. 
\end{enumerate}
\end{rem}

Considering the general operator (\ref{eq:general-derivative}), it
is natural to investigate the conditions on the kernel $k\in L_{\text{loc}}^{1}(\mathbb{R}_{+})$
such that the operator $\mathbb{D}_{t}^{(k)}$ possess a right inverse
(a kind of a fractional integral) and produce, a kind of a fractional
derivative, equations of evolution type. In particular, it means that 
\begin{enumerate}
\item[(A)] The Cauchy problem 
\begin{equation}
\big(\mathbb{D}_{t}^{(k)}u_{\lambda}\big)(t)=-\lambda u_{\lambda}(t),\quad t>0;\quad u(0)=1,\label{eq:CP1}
\end{equation}
where $\lambda>0$, has a unique solution $u_{\lambda}$, infinitely
differentiable for $t>0$ and completely monotone, $u_{\lambda}\in\mathcal{CM}$,
see Appendix\ \ref{sec:functions-spaces} for this and other classes
of functions in what follows. 
\item[(B)] The Cauchy problem 
\begin{equation}
(\mathbb{D}_{t}^{(k)}w)(t,x)=\Delta w(t,x),\quad t>0,\ x\in\mathbb{R}^{d};\quad w(0,x)=w_{0}(x),\label{eq:CP2}
\end{equation}
where $w_{0}$ is a bounded globally Hölder continuous function, that
is $|w_{0}(x)-w_{0}(y)|\le C|x-y|^{\theta}$, $0<\theta\le1$, for
any $x,y\in\mathbb{R}^{d}$, has a unique bounded solution. In addition,
the equation (\ref{eq:CP2}) possesses a fundamental solution of the
Cauchy problem, a kernel which is a probability density. 
\end{enumerate}
\begin{rem}
\label{rem:CP} 

\begin{enumerate}
\item Gripenberg \cite{Gripenberg1985} has established the well-posedness
of the Cauchy problem for equations with the operator $\mathbb{D}_{t}^{(k)}$
under much weaker assumptions than those in (A) and (B). 
\item When $\mathbb{D}_{t}^{(k)}$ is the Caputo-Djrbashian fractional derivative
$\mathbb{D}_{t}^{(\alpha)}$, $0<\alpha<1$, then $u_{\lambda}(t)=E_{\alpha}(-\lambda t^{\alpha})$
where $E_{\alpha}$ is the Mittag-Leffler function 
\[
E_{\alpha}(z)=\sum\limits _{n=0}^{\infty}\frac{z^{n}}{\Gamma(\alpha n+1)}.
\]
\item The asymptotic properties of $E_{\alpha}$ for real arguments are
given by, see for example \cite{GKMS2014}. 
\[
E_{\alpha}(z)\sim\frac{1}{\alpha}e^{z^{1/\alpha}},\quad\mathrm{as}\;z\to\infty
\]
which resembles the classical case $\alpha=1$, $E_{1}(z)=e^{z}$.
On the other hand 
\begin{equation}
E_{\alpha}(z)\sim-\frac{z^{-1}}{\Gamma(1-\alpha)},\quad\mathrm{as}\;z\to-\infty,\label{eq:ML-asymp}
\end{equation}
so that $u_{\lambda}(t)\sim Ct^{-\alpha}$, $t\to-\infty$. Here and
below $C$ denotes a positive constant which changes from line to
line. This slow decay property is at the origin of a large variety
of applications of fractional differential equations. 
\item In the distributed order case, where $k$ is given by \eqref{eq:general-derivative}-(\ref{eq:distributed-kernel})
with $\mu(0)\ne0$, we have a logarithmic decay, see \cite{Kochubei2008}
\[
u_{\lambda}(t)\sim C(\log t)^{-1},\quad\mathrm{as}\;t\to\infty.
\]
A more general choice of the weight function $\mu$ leads to other
type of decay patterns. 
\end{enumerate}
\end{rem}

The conditions upon $k$ guaranteeing a solution to (A) and (B) were
given in \cite{Kochubei11}. The sufficient conditions are as follows. 
\begin{description}
\item [{(H)}] The Laplace transform 
\begin{equation}
\mathcal{K}(p):=(\mathscr{L}k)(p):=\int_{0}^{\infty}e^{-pt}k(t)\,dt\label{eq:Laplace-k}
\end{equation}
exists and $\mathcal{K}$ belongs to the Stieltjes class $\mathcal{S}$
(or equivalently, the function $\mathcal{L}(p):=p\mathcal{K}(p)$
belongs to the complete Bernstein function class $\mathcal{CBF}$),
and 
\begin{equation}
\mathcal{K}(p)\to\infty,\text{ as \ensuremath{p\to0}};\quad\mathcal{K}(p)\to0,\text{ as \ensuremath{p\to\infty}};\label{eq:H1}
\end{equation}
\begin{equation}
\mathcal{L}(p)\to0,\text{ as \ensuremath{p\to0}};\quad\mathcal{L}(p)\to\infty,\text{ as \ensuremath{p\to\infty}}.\label{eq:H2}
\end{equation}
\end{description}
Under the hypotheses (H), $\mathcal{L}(p)$ and its analytic continuation
admit an integral representation, cf.\ \eqref{eq:representation-CBF}
in Appendix\ \ref{sec:functions-spaces} below (see also \cite{Schilling12})
\begin{equation}
\mathcal{L}(p)=\int_{(0,\infty)}\frac{p}{p+t}\,d\sigma(t)\label{3.16}
\end{equation}
where $\sigma$ is a Borel measure on $[0,\infty)$, such that $\int_{(0,\infty)}(1+t)^{-1}\,d\sigma(t)<\infty$.

\subsubsection{Asymptotic Properties}

As the kernel $k$ and its Laplace transform $\mathcal{K}$ are among
the objects which play a major role in what follows, here we collect
some of its asymptotic properties which depends on the kind of fractional
derivative considered. Two cases are studied, the distributed order
derivative with $k$ given by \eqref{eq:distributed-kernel} and the
general fractional derivative \eqref{eq:general-derivative} for which
$\mathcal{K}$ is a Stieltjes function.

\paragraph{Distributed order derivatives.}

The following two propositions refers to the special case of distributed
order derivative, we refer to \cite{Kochubei2008} for the details
and proofs . 
\begin{prop}[{{{cf.\ \cite[Prop.~2.1]{Kochubei2008}}}}]
\label{prop:distr-order-prop-k}If $\mu\in C^{3}([0,1])$ and $\mu(1)\ne0$,
then 
\begin{align}
k(s) & \sim\frac{1}{s}\frac{1}{(\log s)^{2}}\mu(1),\quad\mathrm{as}\;s\to0,\label{eq:asympt-k-inzero}\\
k'(s) & \sim-\frac{1}{s^{2}}\frac{1}{(\log s)^{2}}\mu(1),\quad\mathrm{as}\;s\to0.\label{eq:asympt-kprime-zero}
\end{align}
\end{prop}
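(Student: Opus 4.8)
The plan is to analyze the behavior of $k(s)$ as $s\to 0$ by passing through its Laplace transform $\mathcal{K}(p)$ as $p\to\infty$ and invoking a Tauberian argument, or — following the cited reference more literally — by a direct asymptotic analysis of the defining integral $k(s)=\int_0^1 \frac{s^{-\alpha}}{\Gamma(1-\alpha)}\mu(\alpha)\,d\alpha$. I would take the direct route. Writing $s^{-\alpha}=e^{-\alpha\log s}=e^{\alpha|\log s|}$ for small $s$ (so $|\log s|\to+\infty$), the integrand is sharply peaked near the endpoint $\alpha=1$, since that is where the exponent $\alpha|\log s|$ is largest. Hence the integral is governed by a neighborhood of $\alpha=1$, which is exactly why the hypothesis $\mu(1)\neq 0$ (rather than $\mu(0)\neq 0$, which controls the $s\to\infty$ regime) is the relevant one.

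The key steps are as follows. First, substitute $\alpha = 1 - u/|\log s|$ so that the large exponential becomes $e^{|\log s|}e^{-u} = s^{-1}e^{-u}$, and the integral turns into
\[
k(s) = \frac{1}{s\,|\log s|}\int_0^{|\log s|} \frac{e^{-u}}{\Gamma\!\bigl(u/|\log s|\bigr)}\,\mu\!\bigl(1-u/|\log s|\bigr)\,du.
\]
Second, use $\Gamma(z)\sim 1/z$ as $z\to 0^+$, so $1/\Gamma(u/|\log s|)\sim u/|\log s|$, and $\mu(1-u/|\log s|)\to\mu(1)$ pointwise; the $C^3$ regularity of $\mu$ gives uniform control of the remainder and justifies dominated convergence (the factor $e^{-u}$ supplies integrability, and $u/\Gamma(u/|\log s|)$ stays bounded on the relevant range). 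Third, extract the leading term: the $du$-integral behaves like $\frac{1}{|\log s|}\mu(1)\int_0^\infty u e^{-u}\,du = \frac{\mu(1)}{|\log s|}$, yielding $k(s)\sim \frac{\mu(1)}{s(\log s)^2}$, which is \eqref{eq:asympt-k-inzero}. For \eqref{eq:asympt-kprime-zero}, differentiate the original integral under the integral sign, $k'(s) = -\int_0^1 \frac{\alpha\, s^{-\alpha-1}}{\Gamma(1-\alpha)}\mu(\alpha)\,d\alpha$, and repeat the same Laplace-type/endpoint analysis; the extra factor $\alpha$ tends to $1$ at the peak and the extra power $s^{-1}$ produces the $s^{-2}$ prefactor, giving $k'(s)\sim -\frac{\mu(1)}{s^2(\log s)^2}$. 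Alternatively one can obtain \eqref{eq:asympt-kprime-zero} from \eqref{eq:asympt-k-inzero} by a monotonicity/regular-variation argument, since $k$ is completely monotone under hypothesis (H), so its derivative is monotone and the asymptotics of $k$ transfer to $k'$ via a standard lemma on derivatives of regularly varying functions.

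The main obstacle I anticipate is making the interchange of limit and integral fully rigorous near the moving endpoint $\alpha=1$: one must check that the contribution of the region away from $\alpha=1$ (say $\alpha<1-\delta$) is exponentially smaller — of order $s^{-1+\delta}$ versus $s^{-1}/(\log s)^2$ — and that on the shrinking peak region the approximations $\Gamma(u/|\log s|)^{-1}\approx u/|\log s|$ and $\mu(1-u/|\log s|)\approx\mu(1)$ hold uniformly with errors that vanish after integration against $e^{-u}$. The $C^3$ hypothesis is somewhat more than strictly needed for the leading term but gives comfortable bounds on $\mu$ and its derivatives near $1$, and it is what is used in \cite{Kochubei2008}; so I would simply cite that reference for the detailed estimates and present the endpoint-Laplace heuristic above as the structure of the argument.
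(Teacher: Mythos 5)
Your outline is correct: the substitution $\alpha=1-u/|\log s|$ together with $1/\Gamma(z)\sim z$ as $z\to0^{+}$ does yield $k(s)\sim\mu(1)/(s(\log s)^{2})$ and, after differentiating under the integral sign, the companion asymptotic \eqref{eq:asympt-kprime-zero}. The paper itself gives no proof of this proposition — it defers entirely to \cite[Prop.~2.1]{Kochubei2008} — and your endpoint-Laplace argument is precisely the standard route taken there, so there is nothing to compare beyond noting that your sketch (including the $O(s^{-1+\delta})$ bound away from $\alpha=1$ and the dominated-convergence step on the peak) fills in what the paper leaves to the citation.
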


Notice that \eqref{eq:asympt-k-inzero} implies that $k\in L^{1}([0,T])$,
however $k\notin L^{q}([0,T])$ for any $q>1$.

We denote the negative real axis by $\mathbb{R}_{-}:=\{r\in\mathbb{R},\;r\le0\}$. 
\begin{prop}[{{{cf.\ \cite[Prop.~2.2]{Kochubei2008}}}}]
\label{prop:distr-order-prop-K} 

\begin{enumerate}
\item Let $\mu\in C^{2}([0,1])$ be given. If $p\in\mathbb{C}\backslash\mathbb{R}_{-}$
with $|p|\to\infty$, then 
\begin{equation}
\mathcal{K}(p)=\frac{\mu(1)}{\log p}+O\left((\log|p|)^{-2}\right).\label{infty}
\end{equation}
More precisely, if $\mu\in C^{3}([0,1])$, then 
\[
\mathcal{K}(p)=\frac{\mu(1)}{\log p}-\frac{\mu'(1)}{(\log p)^{2}}+O\left((\log|p|)^{-3}\right).
\]
\item Let $\mu\in C([0,1])$ and $\mu(0)\ne0$ be given. If $p\in\mathbb{C}\setminus\mathbb{R}_{-}$,
then 
\begin{equation}
\mathcal{K}(p)\sim\frac{1}{p}\left(\log\frac{1}{p}\right)^{-1}\mu(0),\quad\mathrm{as}\;p\to0.\label{zero}
\end{equation}
\item Let $\mu\in C([0,1])$ be such that $\mu(\alpha)\sim a\alpha^{\lambda}$,
$a>0$, $\lambda>0$. If $p\in\mathbb{C}\setminus\mathbb{R}_{-}$,
then 
\[
\mathcal{K}(p)\sim a\Gamma(1+\lambda)\frac{1}{p}\left(\log\frac{1}{p}\right)^{-1-\lambda},\quad\mathrm{as}\;p\to0.
\]
\end{enumerate}
\end{prop}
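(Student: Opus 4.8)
The plan is to reduce all three items to a single integral representation of $\mathcal{K}$ and then run a Watson's-lemma / Laplace's-method argument. First I would record the elementary Laplace transform $\mathscr{L}[t^{-\alpha}/\Gamma(1-\alpha)](p)=p^{\alpha-1}$ for $0<\alpha<1$, and combine it with \eqref{eq:distributed-kernel}, \eqref{eq:Laplace-k} and Tonelli's theorem (the double integral converges absolutely for $\mathrm{Re}\,p>0$, since $k\in L^{1}_{\mathrm{loc}}(\mathbb{R}_{+})$ is bounded at infinity) to get
\[
\mathcal{K}(p)=\int_{0}^{1}p^{\alpha-1}\mu(\alpha)\,d\alpha=\frac1p\int_{0}^{1}e^{\alpha\log p}\,\mu(\alpha)\,d\alpha .
\]
Both sides are holomorphic on $\mathbb{C}\setminus\mathbb{R}_{-}$ (on the right, $p\mapsto p^{\alpha-1}$ with the principal branch, integrated over the compact interval $[0,1]$), so the identity persists on all of $\mathbb{C}\setminus\mathbb{R}_{-}$. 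Throughout I would use that $p=|p|e^{i\theta}$ with $|\theta|<\pi$ gives $\log p=\log|p|+i\theta$, hence $\mathrm{Re}\log p=\log|p|$ and $\log|p|\le|\log p|\le\log|p|+\pi$ (and likewise for $\log(1/p)$); this is what turns error bounds that naturally come out in powers of $\mathrm{Re}\log p$ into powers of $\log|p|$.

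For item (1) I would substitute $\alpha=1-s$ and use $e^{\log p}=p$ to obtain $\mathcal{K}(p)=\int_{0}^{1}e^{-s\log p}\mu(1-s)\,ds$, a Watson-type integral with large parameter $L=\log p$, $\mathrm{Re}\,L=\log|p|\to\infty$. Taylor expanding $\mu$ at $1$, $\mu(1-s)=\mu(1)-\mu'(1)s+R(s)$ with $|R(s)|\le Cs^{2}$ for $\mu\in C^{2}$, the remainder contributes $\int_{0}^{1}e^{-sL}R(s)\,ds=O((\mathrm{Re}\,L)^{-3})=O((\log|p|)^{-3})$, while $\int_{0}^{1}e^{-sL}\,ds=L^{-1}+O(e^{-\mathrm{Re}L})$ and $\int_{0}^{1}se^{-sL}\,ds=L^{-2}+O(e^{-\mathrm{Re}L})$, the boundary terms being $O(|p|^{-1})$ and hence negligible against any power of $\log|p|$. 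Collecting terms gives $\mathcal{K}(p)=\mu(1)/\log p-\mu'(1)/(\log p)^{2}+O((\log|p|)^{-3})$ (under $C^{3}$, or already under $C^{2}$ using the integral form of the Taylor remainder), and truncating after the first term yields the $O((\log|p|)^{-2})$ statement under the weaker hypothesis.

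For items (2) and (3), where $p\to0$ in $\mathbb{C}\setminus\mathbb{R}_{-}$, I would set $M=\log(1/p)=-\log p$, so $\mathrm{Re}\,M=\log(1/|p|)\to\infty$ and $|M|\asymp\mathrm{Re}\,M$, and start from $\mathcal{K}(p)=\tfrac1p\int_{0}^{1}e^{-\alpha M}\mu(\alpha)\,d\alpha$, whose asymptotics is now driven by $\mu$ near $\alpha=0$. For (2) I would write $\mu(\alpha)=\mu(0)+(\mu(\alpha)-\mu(0))$: the constant piece yields $\mu(0)\int_{0}^{1}e^{-\alpha M}\,d\alpha=\mu(0)/M+O(|p|)$, and for the remaining piece continuity of $\mu$ at $0$ gives, for each $\varepsilon>0$, a splitting at some $\delta$ with bound $\varepsilon/\mathrm{Re}\,M+O(e^{-\delta\mathrm{Re}M})$, i.e.\ $o(1/|M|)$; hence $\mathcal{K}(p)\sim\mu(0)(p\log(1/p))^{-1}$. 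For (3) I would write $\mu(\alpha)=a\alpha^{\lambda}+g(\alpha)$ with $g(\alpha)=o(\alpha^{\lambda})$ as $\alpha\to0$, use $\int_{0}^{\infty}e^{-\alpha M}\alpha^{\lambda}\,d\alpha=\Gamma(1+\lambda)M^{-1-\lambda}$ (valid for $\mathrm{Re}\,M>0$ after rotating the contour back to $\mathbb{R}_{+}$) together with the exponentially small tail over $[1,\infty)$ and the same $\varepsilon$--$\delta$ estimate for $g$, to get $\int_{0}^{1}e^{-\alpha M}\mu(\alpha)\,d\alpha=a\Gamma(1+\lambda)M^{-1-\lambda}+o(|M|^{-1-\lambda})$, which gives $\mathcal{K}(p)\sim a\Gamma(1+\lambda)\tfrac1p(\log(1/p))^{-1-\lambda}$.

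The main obstacle I anticipate is not any single computation but the uniformity of these expansions as $|p|\to\infty$ (resp.\ $p\to0$) over the whole sector $\mathbb{C}\setminus\mathbb{R}_{-}$ rather than merely along $\mathbb{R}_{+}$. I would handle this by estimating each remainder integral $\int_{0}^{1}e^{-sL}h(s)\,ds$ through $\int_{0}^{\infty}e^{-s\,\mathrm{Re}\,L}|h(s)|\,ds$ and invoking the elementary comparison $\mathrm{Re}\log p=\log|p|$, $\bigl||\log p|-\log|p|\bigr|\le\pi$ (and the analogue for $\log(1/p)$), which makes powers of $|\log p|$ and of $\log|p|$ interchangeable in the $O$ and $o$ terms. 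Once that comparison is in place, everything else is routine bookkeeping with Watson's lemma.
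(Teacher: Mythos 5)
Your proposal is correct. Note that the paper itself offers no proof of this proposition --- it is quoted with reference to Kochubei's paper on distributed order calculus, and the surrounding text explicitly defers "details and proofs" to that reference --- so there is no internal argument to compare against; your reduction to $\mathcal{K}(p)=\int_{0}^{1}p^{\alpha-1}\mu(\alpha)\,d\alpha$ followed by a Watson's-lemma analysis at the dominant endpoint ($\alpha=1$ for $|p|\to\infty$, $\alpha=0$ for $p\to0$), with all remainders controlled via $\mathrm{Re}\log p=\log|p|$ and $\bigl||\log p|-\log|p|\bigr|\le\pi$ on $\mathbb{C}\setminus\mathbb{R}_{-}$, is exactly the standard route taken in the cited source and the estimates you give are sound. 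The only wording to tighten is that the Laplace transform $\int_{0}^{\infty}e^{-pt}k(t)\,dt$ itself converges only for $\mathrm{Re}\,p>0$ (since $k$ decays only logarithmically at infinity when $\mu(0)\ne0$), so for $p\in\mathbb{C}\setminus\mathbb{R}_{-}$ the identity $\mathcal{K}(p)=\int_{0}^{1}p^{\alpha-1}\mu(\alpha)\,d\alpha$ should be stated as the analytic continuation of $\mathcal{K}$, which is how the proposition is meant to be read.
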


\paragraph{Classes of Stieltjes functions.}

Now we turn our attention to the general fractional derivative \eqref{eq:general-derivative}.
Thus, under the assumption (H), the Stieltjes function $\mathcal{K}$
admits the following integral representation \cite{Kochubei11} 
\begin{equation}
\mathcal{K}(p)=\int_{(0,\infty)}\frac{1}{p+t}\,d\sigma(t),\quad p>0,\label{eq:LT-k}
\end{equation}
where $\sigma$ is a Borel measure on $\mathbb{R}_{+}:=[0,\infty)$
such that $\int_{(0,\infty)}(1+t)^{-1}\,d\sigma(t)<\infty$, see also
Definition\ \ref{def:Stieltjes-function}. In other words, $\mathcal{K}$
is the Stieltjes transform of the Borel measure $\sigma$. In addition
we assume that $\sigma\in\mathcal{M}_{\mathrm{abs}}(\mathbb{R}_{+})$,
that is $\sigma$ is absolutely continuous with respect to Lebesgue
measure with a continuous density $\varphi$ on $[0,\infty)$ such
that \eqref{eq:LT-k} turns out 
\begin{equation}
\mathcal{K}(p)=\int_{0}^{\infty}\frac{\varphi(t)}{p+t}\,dt,\label{eq:LT-k-1}
\end{equation}
such that 
\begin{align}
\varphi(t) & \sim Ct^{-\alpha},\quad\mathrm{as}\;t\to\infty,\;0<\alpha<1,\label{wong_infty}\\
\varphi(t) & \sim Ct^{\theta-1},\quad\mathrm{as}\;t\to0,\;0<\theta<1.\label{power_zero}
\end{align}
Then, if $\varphi\in L_{\mathrm{loc}}^{1}([0,\infty))$ it follows
from \cite[Thm.~1, page~299]{Wong2001} (see also \cite{FL}) that
the asymptotic (\ref{wong_infty}) implies the asymptotic for $\mathcal{K}$
\begin{equation}
\mathcal{K}(p)\sim Cp^{-\alpha},\quad\mathrm{as}\;p\to\infty.\label{wong_infty1}
\end{equation}
For the asymptotic of $\mathcal{K}$ at the origin, we have the following
lemma, a special case of a result from \cite[page~326]{Riekstynsh1981}
given there without a proof. 
\begin{lem}
\label{lem:GFD-Lemma}Suppose that 
\begin{equation}
\varphi(t)=Ct^{\theta-1}+\psi(t),\quad0<\theta<1,\label{eq:density}
\end{equation}
where $|\psi(t)|\le Ct^{\theta-1+\delta}$, $0<t\le t_{0}$, and $|\psi(t)|\le Ct^{-\varepsilon}$,
$t>t_{0}$ . Here $0<\delta<1-\theta$ and $\varepsilon>0$. Then
\[
\mathcal{K}(p)\sim Cp^{\theta-1},\quad\mathrm{as}\;p\to0.
\]
\end{lem}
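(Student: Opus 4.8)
The plan is to separate the Stieltjes transform of the pure power from that of the perturbation and to show that the perturbation contributes only lower-order terms as $p\to0$. Writing
\[
\mathcal{K}(p)=C\int_{0}^{\infty}\frac{t^{\theta-1}}{p+t}\,dt+\int_{0}^{\infty}\frac{\psi(t)}{p+t}\,dt=:I_{1}(p)+I_{2}(p),
\]
I would first note that both integrals converge absolutely for every $p>0$: in $I_{1}$ the integrand is $\sim t^{\theta-1}/p$ near $0$ (integrable since $\theta>0$) and $\sim t^{\theta-2}$ near $\infty$ (integrable since $\theta<1$), and the bounds on $\psi$ give the analogous control in $I_{2}$. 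For $I_{1}$ the scaling substitution $t=ps$ gives the exact value
\[
I_{1}(p)=Cp^{\theta-1}\int_{0}^{\infty}\frac{s^{\theta-1}}{1+s}\,ds=Cp^{\theta-1}\,B(\theta,1-\theta)=Cp^{\theta-1}\,\frac{\pi}{\sin(\pi\theta)},
\]
which already produces the claimed power $p^{\theta-1}$ with an explicit constant.

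Next I would estimate $I_{2}$ by splitting the integral at $t_{0}$. On $(t_{0},\infty)$, using $|\psi(t)|\le Ct^{-\varepsilon}$ and $p+t\ge t$, the tail is dominated by $C\int_{t_{0}}^{\infty}t^{-1-\varepsilon}\,dt$, a finite constant independent of $p\ge0$; hence this part is $O(1)$. On $(0,t_{0})$, using $|\psi(t)|\le Ct^{\theta-1+\delta}$ and again the substitution $t=ps$,
\[
\Bigl|\int_{0}^{t_{0}}\frac{\psi(t)}{p+t}\,dt\Bigr|\le Cp^{\theta-1+\delta}\int_{0}^{t_{0}/p}\frac{s^{\theta-1+\delta}}{1+s}\,ds\le Cp^{\theta-1+\delta}\int_{0}^{\infty}\frac{s^{\theta-1+\delta}}{1+s}\,ds,
\]
and the final integral is finite \emph{exactly} under the hypothesis $0<\delta<1-\theta$: near $0$ the exponent $\theta-1+\delta>-1$, near $\infty$ the exponent $\theta-2+\delta<-1$. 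Thus $I_{2}(p)=O(p^{\theta-1+\delta})+O(1)$ as $p\to0$.

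Combining, $\mathcal{K}(p)=Cp^{\theta-1}\bigl(1+O(p^{\delta})+O(p^{1-\theta})\bigr)$ as $p\to0$, and since $\delta>0$ and $1-\theta>0$ both error terms vanish, which yields $\mathcal{K}(p)\sim Cp^{\theta-1}$. I do not expect a genuine obstacle here: the only points needing care are checking that the convergence ranges of the two $s$-integrals coincide precisely with the assumptions $\theta\in(0,1)$ and $\delta\in(0,1-\theta)$ — this is what makes the hypotheses sharp — and observing that the split at $t_{0}$ together with the dominating bounds is uniform in $p$ on a right neighbourhood of $0$, which is automatic since every estimate above holds for all $p>0$.
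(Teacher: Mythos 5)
Your proposal is correct and follows essentially the same route as the paper: the same decomposition of $\mathcal{K}$ into the pure-power Stieltjes integral plus the perturbation, the same scaling substitution $t=ps$ to evaluate the main term as a constant times $p^{\theta-1}$, and the same split of the remainder at $t_{0}$ yielding the bounds $O(p^{\theta-1+\delta})+O(1)$. The only additions are cosmetic but welcome: you record the explicit Beta-function constant and point out that convergence of the rescaled integrals is exactly where the hypotheses $\theta\in(0,1)$ and $\delta\in(0,1-\theta)$ are used.
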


\begin{proof}
It follows from \eqref{eq:LT-k-1} and \eqref{eq:density} that $\mathcal{K}$
is equal to $\mathcal{K}(p)=s_{0}(p)+s_{1}(p)$ where 
\begin{align}
s_{0}(p) & =C\int_{0}^{\infty}\frac{t^{\theta-1}}{t+p}\,dt,\label{eq:integral-1}\\
s_{1}(p) & =C\int_{0}^{t_{0}}\frac{\psi(t)}{t+p}\,dt.\label{eq:integral-2}
\end{align}
The integral in \eqref{eq:integral-1} may be evaluated making the
change of variables $t=p\tau$ and we find that $s_{0}(p)=Cp^{\theta-1}$
(with a different constant $C$ independent of $p$). On the other
hand, $s_{1}(p)$ is estimated by

\[
|s_{1}(p)|\le C\int_{0}^{t_{0}}\frac{t^{\theta-1+\delta}}{t+p}\,dt+C\int_{t_{0}}^{\infty}\frac{t^{-\varepsilon}}{t+p}\,dt\le Cp^{\theta-1+\delta}+C,
\]
Putting together, the required asymptotic follows. 
\end{proof}

\section{Solutions of the Evolution Equations}

\label{sec:Solution-Evol-Eq}Let $L$ be a heuristic Markov generator
defined on functions $u(t,x)$, $t>0$, $x\in\mathbb{R}^{d}$. We
have in mind the Bolker-Pacala model and the related non-linear equation,
see Subsection\ \ref{subsec:Ecological-model} for details. Consider
the evolution equations of the following type 
\begin{align}
\frac{\partial u_{1}(t,x)}{\partial t} & =(Lu_{1})(t,x),\label{eq:EQ-1}\\
(\mathbb{D}_{t}^{(k)}u_{(k)})(t,x) & =(Lu_{(k)})(t,x),\label{eq:EQ-2}
\end{align}
with the same operator $L$ acting in the spatial variables $x$ with
the same initial conditions 
\[
u_{1}(0,x)=\xi(x),\quad u_{(k)}(0,x)=\xi(x).
\]
The solutions of equations \eqref{eq:EQ-1} and \eqref{eq:EQ-2} typically
satisfy the subordination principle \cite{Bazhlekova00}, that is
there exists a nonnegative density kernel function $\varrho_{t}(s)$,
$s,t>0$, such that $\int_{0}^{\infty}\varrho_{t}(s)\,ds=1$ and 
\begin{equation}
u_{(k)}(t,x)=\int_{0}^{\infty}\varrho_{t}(s)u_{1}(s,x)\,ds.\label{5}
\end{equation}

The appropriate notions of the solutions of (\ref{eq:EQ-1}) and (\ref{eq:EQ-2})
depend on the specific setting, they were explained 
\begin{itemize}
\item in \cite{Kochubei11} for the case where $L$ is the Laplace operator
on $\mathbb{R}^{n}$, 
\item in \cite{Bazhlekova00,Baz01,Bazhlekova2015} with abstract semigroup
generators for special classes of kernels $k$, 
\item in \cite{Pruss12} for abstract Volterra equations. 
\end{itemize}
There is also a probabilistic interpretation of the subordination
identities (see, for example, \cite{Kolokoltsov2011,Sato1999}). In
the models of statistical dynamics we deal with a subordination of
measure flows that will give a weak solution to the corresponding
fractional equation.

In the above relation (\ref{5}), the subordination kernel $\varrho_{t}(s)$
does not depend on $L$ and can be found as follows \cite{Kochubei11}.
Consider the function 
\begin{equation}
g(s,p):=\mathcal{K}(p)e^{-s\mathcal{L}(p)},\quad s>0,\;p>0.\label{eq:g}
\end{equation}
The function $p\mapsto e^{-s\mathcal{L}(p)}$ is the composition of
a complete Bernstein and a completely monotone function, then by Theorem\ \ref{thm:chara-Bernstein-ftions}-2
it is a completely monotone function. By Bernstein's theorem (cf.\ Theorem\ \ref{thm:Bernstein}),
for each $s\ge0$, there exists a probability measure $\mu_{s}$ on
$\mathbb{R}_{+}$ such that 
\begin{equation}
e^{-s\mathcal{L}(p)}=\int_{0}^{\infty}e^{-p\tau}\,d\mu_{s}(\tau).\label{eq:Laplace-family}
\end{equation}
The family of measures $\{\mu_{s},\;s>0\}$ is weakly continuous in
$s$. Define 
\begin{equation}
\varrho_{t}(s):=\int\limits _{0}^{t}k(t-\tau)\,d\mu_{s}(\tau).\label{6}
\end{equation}
It follows from \eqref{eq:Laplace-k} and \eqref{eq:Laplace-family}
that the $t$-Laplace transform of $\varrho_{t}(s)$ is equal to $g(s,p$):
\begin{equation}
g(s,p)=\int_{0}^{\infty}e^{-pt}\varrho_{t}(s)\,dt.\label{7}
\end{equation}
It is easy to see from \eqref{eq:g} that 
\[
\int_{0}^{\infty}g(s,p)\,ds=\frac{1}{p}
\]
such that \eqref{7} may be written as 
\[
\int_{0}^{\infty}e^{-pt}\,dt\int_{0}^{\infty}\varrho_{t}(s)\,ds=\frac{1}{p}
\]
which implies the equality 
\[
\int_{0}^{\infty}\varrho_{t}(s)\,ds=1.
\]

\begin{example}[$\alpha$-stable subordinator]
\label{exa:alpha-stable}Let $S$ be a $\alpha$-stable subordinator,
$\alpha\in(0,1)$ with Laplace exponent $\mathcal{L}(p)=p^{\alpha}$
and the corresponding L\'evy measure 
\[
d\sigma(s)=\frac{\alpha}{\Gamma(1-\alpha)}s^{-(\alpha+1)}\,ds.
\]
In this case $\mathcal{K}(p)=p^{\alpha-1}$ and the kernel $k$ is
given by 
\[
k(t)=\frac{t^{-\alpha}}{\Gamma(1-\alpha)}.
\]
The associated general fractional derivative $\mathbb{D}_{t}^{(k)}$
in \eqref{eq:general-derivative} coincides with the Caputo-Djrbashian
fractional derivatives $\mathbb{D}_{t}^{(\alpha)}$, see \eqref{eq:Caputo-derivative}.
As for the density $\varrho_{t}(\tau)$, it follows from Corollary
3.1 in \cite{MSK04} that 
\[
\varrho_{t}(\tau)=\frac{t}{\alpha}\tau^{-1-\nicefrac{1}{\alpha}}g_{\alpha}(t\tau^{-\nicefrac{1}{\alpha}}),
\]
where $g_{\alpha}$ is the density function of $S_{1}$, that is its
Laplace transform is given by 
\[
\tilde{g}_{\alpha}(p)=e^{-p^{\alpha}}.
\]
In addition, it was shown in Proposition\ 1(a) in \cite{Bingham1971},
see also Theorem 4.3 in \cite{Bondesson1996}, that $E_{t}$ has a
Mittag-Leffler distribution, that is 
\begin{equation}
\tilde{\varrho}_{t}(p)=\mathbb{E}(e^{-pE_{t}})=\sum_{n=0}^{\infty}\frac{(-pt^{\alpha})^{n}}{\Gamma(n\alpha+1)}=E_{\alpha}(-pt^{\alpha}).\label{eq:Laplace-density-alpha}
\end{equation}
It follows from the asymptotic of the Mittag-Leffler function $E_{\alpha}$
in \eqref{eq:ML-asymp} that 
\[
\tilde{\varrho}_{t}(p)\sim\frac{C}{t^{\alpha}},\;\mathrm{as}\;t\to\infty.
\]
\end{example}

\section{Subordination of Moving Step Function}

\label{sec:sub-moving-step}Let $u_{0}(t,x)$ be a solution to the
kinetic evolution equation, say for the Bolker-Pacala model from Subsection\ \ref{subsec:Ecological-model}.
The subordinated dynamics is given by 
\begin{equation}
u(t,x)=\int_{0}^{\infty}\varrho_{t}(\tau)u_{0}(\tau,x)\,d\tau.\label{eq:subod-dynamics}
\end{equation}
As a simple example we take a traveling step function (this is a toy
example of a traveling wave in fact) $u_{0}(t,x)=1_{(-\infty,0]}(x-tv)$,
where $x\in\mathbb{R}$, $t,v\in\mathbb{R}_{+}$ and consider the
subordination of the moving step function. In this case 
\[
u(t,x)=\int_{0}^{\infty}\varrho_{t}(\tau)u_{0}(\tau,x)\,d\tau=\int_{x/v}^{\infty}\varrho_{t}(\tau)\,d\tau.
\]
We are interested in studying this dynamics, one possibility is to
study the \emph{Cesaro limit} 
\begin{equation}
M_{t}(u)=\frac{1}{t}\int_{0}^{t}u(\tau,x)\,d\tau,\;\mathrm{as}\;t\to\infty.\label{eq:Cesaro-limit}
\end{equation}
The Cesaro limit \eqref{eq:Cesaro-limit} may be realized in a number
of particular cases related to the fractional derivative considered.
Below we investigate three cases corresponding to the $\alpha$-stable
subordinator, the distributed order derivative and general fractional
derivative with $\mathcal{K}$ fulfilling (H). 
\begin{rem}
The Cesaro limit of the initial step function $u_{0}(\cdot,x)$, for
fixed $x$, is given by 
\[
M_{t}(u_{0})=\frac{1}{t}\int_{0}^{t}u_{0}(\tau,x)\,d\tau=\frac{1}{t}\left(t-\left(0\vee\frac{x}{v}\right)\right)\longrightarrow1,\;t\to\infty.
\]
In addition, for the moving $x(t)=ct^{\beta}$ with $\beta<1$ we
obtain the same asymptotic. Note that the assumption $\beta<1$ is
needed to ensure that $x(t)/t\longrightarrow0$ as $t\to\infty$.

In order to study the Cesaro limit $M_{t}(u)$ in \eqref{eq:Cesaro-limit},
at first we compute the Laplace transform of $u(t,x)$ in $t$ 
\[
\tilde{u}(p,x):=\int_{0}^{\infty}e^{-pt}u(t,x)\,dt=\int_{0}^{\infty}e^{-pt}\int_{x/v}^{\infty}\varrho_{t}(s)\,ds\,dt
\]
using Fubini's theorem and the equality \eqref{7} yields 
\[
\tilde{u}(p,x)=\int_{x/v}^{\infty}g(s,p)\,ds=\int_{x/v}^{\infty}\mathcal{K}(p)e^{-sp\mathcal{K}(p)}\,ds=\frac{1}{p}e^{-\frac{x}{v}p\mathcal{K}(p)}.
\]
The following three cases are distinguished. 
\end{rem}

\begin{enumerate}
\item $\alpha$-stable subordinator. It follows from Example \ref{exa:alpha-stable}
that $\mathcal{K}(p)=p^{\alpha-1}$ which, for fixed $x\in\mathbb{R}$,
implies 
\[
\tilde{u}(p,x)=\frac{1}{p}e^{-\frac{x}{v}p^{\alpha}}=\frac{1}{p}L\left(\frac{1}{p}\right),\;\mathrm{as}\;p\to0,
\]
where $L(x)$, $x>0$ is a slowly varying function, that is 
\[
\lim_{x\to\infty}\frac{L(\lambda x)}{L(x)}=1,\;\mathrm{as}\;x\to\infty,
\]
see also Definition\ \ref{def:RV-functions}-2. It follows from Karamata
Tauberian theorem (cf.\ Theorem\ \ref{thm:Karamata}-(i) with $\rho=1$)
that 
\[
\int_{0}^{t}u(\tau,x)\,d\tau\sim tL(t),\;\mathrm{as}\;t\to\infty
\]
and this implies that the Cesaro limit 
\[
M_{t}(u)=\frac{1}{t}\int_{0}^{t}u(\tau,x)\,d\tau\longrightarrow1,\;\mathrm{as}\;t\to\infty.
\]
The same results holds if ,instead of a fixed $x\in\mathbb{R}$ we
take the moving $x(t)=ct^{\beta}$ with $0<\beta<\alpha<1$. 
\item The distributed order derivative. It follows from the asymptotic at
the origin in \eqref{zero} that 
\[
\tilde{u}(p,x)\sim\frac{1}{p}e^{-\frac{x}{v}\left(\log\frac{1}{p}\right)^{-1}}=:\frac{1}{p}L\left(\frac{1}{p}\right),\;\mathrm{as}\;p\to0,
\]
where $L(p)=e^{-\frac{x}{v}\left(\log\frac{1}{p}\right)^{-1}}$, $p\ge0$
is a slowly varying function. Then an application of the Karamata
Tauberian theorem (cf.\ Theorem\ \ref{thm:Karamata}-(i) with $\rho=1$)
yields 
\[
\int_{0}^{t}u(\tau,x)\,d\tau\sim te^{-\frac{x}{v}(\log t)^{-1}},\;\mathrm{as}\:t\to\infty
\]
which implies the Cesaro limit $M_{t}(u)\longrightarrow1$ as $t\to\infty$,
for any fixed $x\ge0$. For the moving $x(t)=ct^{\beta}$, for any
$\beta>0$, we obtain 
\[
M_{t}u(\cdot,x(t))\longrightarrow0,\;t\to\infty.
\]
Note that the motion of the point $x$ in time with any positive power
turns the Cesaro limit to vanish. 
\item General fractional derivative. If the measure $\sigma$ in \eqref{eq:LT-k}
is absolutely continuous with respect to the Lebesgue measure and
the density $\varphi$ satisfies \eqref{power_zero}, then Lemma\ \ref{lem:GFD-Lemma}
implies that $\mathcal{K}(p)\sim Cp^{\theta-1}$, as $p\to0$ and
\[
\tilde{u}(p,x)\sim\frac{1}{p}e^{-\frac{x}{v}Cp^{\theta}}=\frac{1}{p}L\left(\frac{1}{p}\right),\;\mathrm{as}\;p\to0,
\]
and $L(x)$, $x>0$ is a slowly varying function. Once again by Karamata's
Tauberian theorem (cf.\ Theorem\ \ref{thm:Karamata}-(i) with $\rho=1$)
we obtain the asymptotic for $M_{t}(u)$, namely 
\[
M_{t}(u)\sim e^{-\frac{x}{v}Ct^{-\theta}},\;\mathrm{as}\;t\to\infty
\]
and again we have $M_{t}(u)\longrightarrow1$ as $t\to\infty$. For
the moving $x(t)=ct^{\beta}$ for any $\beta>\theta$, we obtain 
\[
M_{t}u(\cdot,x(t))\sim e^{-Ct^{\beta-\theta}}\longrightarrow0,\;t\to\infty.
\]
The motion of the point $x$ in time with a positive power $\beta$
such that $\beta>\theta$ turns the Cesaro limit to vanish. 
\end{enumerate}

\section{Traveling Waves}

\label{sec:traveling-waves}Now we would like to consider a realistic
dynamics $u_{0}(t,x)$ which is presented by a traveling wave for
the non-local spatial logistic equation. This evolution equation appeared
as the kinetic equation in the Bolker-Pacala ecological model, see
Subsection \ref{subsec:Ecological-model} and \cite{BP1997,FM2004,Finkelshtein2009,FKK10,FKK12,FKKK15}
for more details.

To avoid certain technical details, we will assume the following concrete
relations between the mortality $m$, competition $a^{-}$ and dispersion
$a^{+}$ kernels on the generator $L$ \eqref{eq:Bolker-Pakala-op},
see \cite{FKT2018} for more details. 
\begin{description}
\item [{(A)}] The kernels $a^{\pm}\in L^{\infty}(\mathbb{R}^{d})\cap L^{1}(\mathbb{R}^{d})$
are probability densities, that is 
\[
\int_{\mathbb{R}^{d}}a^{\pm}(y)\,dy=1.
\]
The mortality $0<m<1$. 
\end{description}
\begin{rem}
Under the assumption (A) equation \eqref{eq:BPM-density-ode} has
two constants stationary solutions $\rho_{t}\equiv0$ and $\rho_{t}\equiv1$. 
\end{rem}

A traveling wave $u(t,x)$, $t\ge0$, $x\in\mathbb{R}$ with velocity
$v>0$ is defined by a profile function $\psi:\mathbb{R}\longrightarrow[0,1]$,
that is a continuous monotonically decreasing function such that 
\[
\lim_{x\to-\infty}\psi(x)=1,
\]
\[
\lim_{x\to\infty}\psi(x)=0,
\]
and $u(t,x)=\psi(x-vt)$, $t\ge0$ for almost all $x\in\mathbb{R}$.
For each $\delta>0$ introduce $x_{\delta}\in\mathbb{R}$ as 
\[
\forall x>x_{\delta},\;\;\psi(x)<\delta\quad\mathrm{and}\quad\forall x<-x_{\delta},\;\;\psi(x)>1-\delta.
\]
For a fixed $x\in\mathbb{R}$ the traveling wave $u_{0}(t,x)=\psi(x-vt)$
as a function of $t$ is monotonically increasing and has the following
properties: 
\begin{align}
\psi(x-vt) & <\delta,\quad\forall t<\frac{x-x_{\delta}}{v},\label{eq:travel1}\\
\psi(x-vt) & >1-\delta,\quad\forall t>\frac{x+x_{\delta}}{v},\label{eq:travel2}\\
\psi(x-vt) & \in(0,1),\quad\forall t\in\left]\frac{x-x_{\delta}}{v},\frac{x+x_{\delta}}{v}\right[.\label{eq:travel3}
\end{align}
As a characteristic of this dynamics, we again consider the Cesaro
limit 
\[
M_{t}(u_{0})=\frac{1}{t}\int_{0}^{t}u_{0}(\tau,x)\,d\tau,\;\mathrm{as}\;t\to\infty.
\]
It follows from \eqref{eq:travel1}-\ref{eq:travel3} the following
upper bound for $M_{t}(u_{0})$ 
\[
M_{t}(u_{0})=\frac{1}{t}\int_{0}^{t}\psi(x-v\tau)\,d\tau\leq\frac{1}{t}\left(\delta\frac{x-x_{\delta}}{v}+\frac{2x_{\delta}}{v}+t-\frac{x+x_{\delta}}{v}\right).
\]
On the other hand, a bound from below of $M_{t}(u_{0})$ is obtained
by 
\[
M_{t}(u_{0})\geq\frac{1}{t}\left[\frac{2x_{\delta}}{v}+(1-\delta)\left(t-\frac{x+x_{\delta}}{v}\right)\right].
\]
Putting together, we have 
\[
1-\delta\leq\lim_{t\to\infty}M_{t}(u)\leq1,
\]
due to arbitrary $\delta>0$, the Cesaro limit 
\[
\lim_{t\to\infty}M_{t}(u)=1.
\]

\begin{rem}
Note that for the moving $x(t)=ct^{\beta}$, $0<\beta<1$ the asymptotic
for $M_{t}(u_{0})$ will be the same. 
\end{rem}

Now we will consider the subordinated dynamics 
\begin{equation}
u(t,x)=\int_{0}^{\infty}\varrho_{t}(\tau)u_{0}(\tau,x)d\tau\label{eq:main-caract-travel}
\end{equation}
and study the Cesaro limit 
\begin{equation}
M_{t}(u)=\frac{1}{t}\int_{0}^{t}u(\tau,x)\,d\tau,\;\mathrm{as}\;t\to\infty.\label{eq:mean-value}
\end{equation}
To this end, at first we rewrite $u(t,x)$ as the sum of three terms.
Denoting 
\[
\zeta_{-}:=\frac{x-x_{\delta}}{v},\qquad\zeta_{+}:=\frac{x+x_{\delta}}{v}
\]
we have 
\begin{align}
u(t,x) & =\int_{0}^{\zeta_{-}}\varrho_{t}(\tau)u_{0}(\tau,x)\,d\tau+\int_{\zeta_{-}}^{\zeta_{+}}\varrho_{t}(\tau)u_{0}(\tau,x)\,d\tau+\int_{\zeta_{+}}^{\infty}\varrho_{t}(\tau)u_{0}(\tau,x)\,d\tau\nonumber \\
 & =:I_{1}(t,x)+I_{2}(t,x)+I_{3}(t,x).\label{eq:three-sum}
\end{align}
We study each term separately. 
\begin{description}
\item [{$I_{1}(t,x)$:}] It follows from \eqref{eq:travel1} that 
\[
0\le I_{1}(t,x)\le\delta\int_{0}^{\zeta_{-}}\varrho_{t}(\tau)\,d\tau=\delta\left(1-\int_{\zeta_{-}}^{\infty}\varrho_{t}(\tau)\,d\tau\right).
\]
Therefore, we have 
\[
0\le\frac{1}{t}\int_{0}^{t}I_{1}(s,x)\,ds\le\delta-\frac{\delta}{t}\int_{0}^{t}\int_{\zeta_{-}}^{\infty}\varrho_{t}(\tau)\,d\tau\,ds.
\]
The asymptotic for the integral on the rhs follows as in Section\ \ref{sec:sub-moving-step}
(with $\zeta_{-}$ instead of $\frac{x}{v}$) for all three cases
of $\alpha$-stable subordinator (corresponding to Caputo-Djrbashian
fractional derivatives) distributed order derivative, general fractional
derivative, we have 
\[
\lim_{t\to\infty}\frac{1}{t}\int_{0}^{t}\int_{\zeta_{-}}^{\infty}\varrho_{s}(\tau)\,d\tau\,ds=1
\]
which implies 
\[
\lim_{t\to\infty}\frac{1}{t}\int_{0}^{t}I_{1}(s,x)\,ds=0.
\]
\item [{$I_{2}(t,x):$}] In order to study the behavior in $t$ of 
\[
\frac{1}{t}\int_{0}^{t}I_{2}(s,x)\,ds\leq\frac{1}{t}\int_{0}^{t}\int_{\zeta_{-}}^{\zeta_{+}}\varrho_{s}(\tau)\,d\tau\,ds,\;\mathrm{as}\;t\to\infty
\]
let us at first define 
\[
f(s):=\int_{\zeta_{-}}^{\zeta_{+}}\varrho_{s}(\tau)\,d\tau.
\]
The Laplace transform of $f$ is equal to 
\begin{align*}
\tilde{f}(p) & =\int_{0}^{\infty}e^{-ps}f(s)\,ds\\
 & =\int_{0}^{\infty}e^{-ps}\int_{\zeta_{-}}^{\zeta_{+}}\varrho_{s}(\tau)\,d\tau\,ds\\
 & =\int_{\zeta_{-}}^{\zeta_{+}}g(\tau,p)\,d\tau\\
 & =g_{1}(p)-g_{2}(p),
\end{align*}
where 
\[
g_{1}(p)=\frac{1}{p}e^{-\zeta_{+}p\mathcal{K}(p)},\qquad g_{2}(p)=\frac{1}{p}e^{-\zeta_{-}p\mathcal{K}(p)}.
\]
In all the three cases of fractional derivative we have considered
(e.g., for the general fractional derivative we use our hypothesis
\eqref{eq:H2}) we have $p\mathcal{K}(p)\to0$ as $p\to0$, therefore
$e^{-\zeta_{+}p\mathcal{K}(p)}\to1$ and $e^{-\zeta_{-}p\mathcal{K}(p)}\to1$
as $p\to0$. An application of Karamata's Tauberian theorem, see Theorem\ \ref{thm:Karamata},
yields 
\[
\frac{1}{t}\int_{0}^{t}f(\tau)\,d\tau\sim\exp\left(-\frac{1}{t}\zeta_{+}\mathcal{K}\left(\frac{1}{t}\right)\right)-\exp\left(-\frac{1}{t}\zeta_{-}\mathcal{K}\left(\frac{1}{t}\right)\right),\;\mathrm{as}\;t\to\infty.
\]
Therefore, for the term $I_{2}(t,x)$, we have 
\[
0\le\frac{1}{t}\int_{0}^{t}I_{2}(s,x)\,ds\le\frac{1}{t}\int_{0}^{t}f(s)\,ds\longrightarrow0,\;\mathrm{as}\;t\to\infty
\]
which implies 
\[
\lim_{t\to\infty}\frac{1}{t}\int_{0}^{t}I_{2}(s,x)\,ds=0.
\]
\item [{$I_{3}(t,x):$}] Finally we investigate the Cesaro limit of $I_{3}(t,x)$,
that is 
\[
\frac{1}{t}\int_{0}^{t}I_{3}(\tau,x)\,d\tau,\;\mathrm{as}\;t\to\infty.
\]
It follows from \eqref{eq:travel2} the estimates 
\[
(1-\delta)\frac{1}{t}\int_{0}^{t}\int_{\zeta_{+}}^{\infty}\varrho_{\tau}(s)\,ds\,d\tau\le\frac{1}{t}\int_{0}^{t}I_{3}(\tau,x)\,d\tau\le\frac{1}{t}\int_{0}^{t}\int_{\zeta_{+}}^{\infty}\varrho_{\tau}(s)\,ds\,d\tau.
\]
The integral 
\[
\int_{\zeta_{+}}^{\infty}\varrho_{\tau}(s)\,ds
\]
was studied in Section \ref{sec:sub-moving-step} with $\zeta_{+}=\nicefrac{x}{v}$
and its Cesaro limit, for all the three types of fractional derivatives
considered in Subsection \ref{subsec:GFD}, was shown to be 
\[
\lim_{t\to\infty}\frac{1}{t}\int_{0}^{t}\int_{\zeta_{+}}^{\infty}\varrho_{\tau}(s)\,ds\,d\tau=1.
\]
Hence, we have 
\[
1-\delta\le\lim_{t\to\infty}\frac{1}{t}\int_{0}^{t}I_{3}(\tau,x)\,d\tau\le1.
\]
From the arbitrary of $\delta>0$ we obtain 
\[
\lim_{t\to\infty}\frac{1}{t}\int_{0}^{t}I_{3}(\tau,x)\,d\tau=1.
\]
\end{description}
Putting all together, the Cesaro limit for the subordinated dynamics
by the density $\varrho_{t}(\tau)$ gives 
\[
\lim_{t\to\infty}M_{t}(u)=\lim_{t\to\infty}\frac{1}{t}\int_{0}^{t}u(\tau,x)\,d\tau=1.
\]
This result is true for the three type of fractional derivatives considered
in Subsection \ref{subsec:GFD}.

\appendix

\section{Bernstein, Complete Bernstein and Stieltjes Functions}

\label{sec:functions-spaces}In this appendix we collect certain notions
of functions theory needed throughout the paper. Namely, the classes
of completely monotone, Stieltjes, Bernstein functions and complete
Bernstein functions. They are used in connection with the properties
of the Laplace transform (LT). More details on these classes may be
found in \cite{Schilling12}.

\paragraph{Completely monotone functions. }

The LT (one-sided) of a function $f:[0,\infty)\longrightarrow[0,\infty)$
or a measure $\mu$ on $\mathcal{B}([0,\infty))$ is defined by 
\[
\tilde{f}(p):=(\mathscr{L}f)(p):=\int_{0}^{\infty}e^{-p\tau}f(\tau)\,d\tau\quad\mathrm{or}\quad(\mathscr{L}\mu)(p):=\int_{[0,\infty)}e^{-p\tau}\,d\mu(\tau),
\]
respectively, whenever these integrals converge. It is clear that
$\mathscr{L}u=\mathscr{L}\mu_{u}$ if $d\mu_{u}(\tau)=u(\tau)\,d\tau$.
Finite measures on $[0,\infty)$ are uniquely determined by their
LT. 
\begin{defn}
\label{def:monotone-functions}A $C^{\infty}$-function $\varphi:[0,\infty)\longrightarrow\mathbb{R}$
is called completely monotone if 
\[
(-1)^{n}\varphi^{(n)}(\tau)\ge0,\quad\forall n\in\mathbb{N}_{0}:=\mathbb{N}\cup\{0\},\quad\tau>0.
\]
The family of all completely monotone functions will be denoted by
$\mathcal{CM}$. 
\end{defn}

The function $[0,\infty)\ni\tau\mapsto e^{-\tau t}$, $0\le t<\infty$
is a prime example of a completely monotone function. In fact, any
element $\varphi\in\mathcal{CM}$ can be written as an integral mixture
of this family. This is precisely the contents of the next theorem,
due to Bernstein, on the characterization of the class $\mathcal{CM}$
in terms the LT of positive measures supported on $[0,\infty)$. For
the proof we refer to \cite[Thm.~1.4]{Schilling12}. 
\begin{thm}[Bernstein]
\label{thm:Bernstein}Let $\varphi:(0,\infty)\longrightarrow\mathbb{R}$
be a completely monotone function.

\begin{enumerate}
\item Then there exists a unique measure $\mu$ on $[0,\infty)$ such that
\[
\tilde{\varphi}(p)=\int_{[0,\infty)}e^{-p\tau}\,d\mu(\tau),\quad p>0.
\]
\item Conversely, whenever $\tilde{\varphi}(p)<\infty$, $\forall p>0$,
the function $[0,\infty)\ni p\mapsto\tilde{\varphi}(p)$ is completely
monotone, that is $\varphi$ belongs to the class $\mathcal{CM}$. 
\end{enumerate}
\end{thm}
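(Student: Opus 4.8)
The plan is to prove the two parts separately: the converse (part 2) is a routine application of differentiation under the integral sign, while the forward direction (part 1) rests on the classical Hausdorff moment argument. For part 2, assume $\tilde\varphi(p)=\int_{[0,\infty)}e^{-p\tau}\,d\mu(\tau)<\infty$ for all $p>0$. Since $\partial_p^n e^{-p\tau}=(-\tau)^n e^{-p\tau}$ and, for $p$ in any half-line $[a,\infty)$ with $a>0$, one has $\tau^n e^{-p\tau}\le C_n e^{-(a/2)\tau}$ with $\int_{[0,\infty)}e^{-(a/2)\tau}\,d\mu(\tau)=\tilde\varphi(a/2)<\infty$, differentiation under the integral sign is legitimate to all orders. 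Hence $\tilde\varphi\in C^\infty((0,\infty))$ and $(-1)^n\tilde\varphi^{(n)}(p)=\int_{[0,\infty)}\tau^n e^{-p\tau}\,d\mu(\tau)\ge0$, i.e. $\tilde\varphi\in\mathcal{CM}$.

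For part 1, the first step is to observe that complete monotonicity forces the iterated forward differences of $\varphi$ to alternate in sign: writing $\Delta_h\varphi(t):=\varphi(t+h)-\varphi(t)$, iteration of the fundamental theorem of calculus gives
\[
(\Delta_h^m\varphi)(t)=\int_0^h\!\!\cdots\!\!\int_0^h \varphi^{(m)}(t+s_1+\cdots+s_m)\,ds_1\cdots ds_m ,
\]
so $(-1)^m(\Delta_h^m\varphi)(t)\ge0$ for all $t>0$, $h>0$, $m\ge0$. Fixing $a>0$, this says precisely that the sequence $c_n:=\varphi((n+1)a)$ is a completely monotone (Hausdorff) sequence, so by the Hausdorff moment theorem there is a unique finite positive measure $\nu_a$ on $[0,1]$ with $c_n=\int_{[0,1]}x^n\,d\nu_a(x)$. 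The change of variables $x=e^{-as}$, together with a harmless exponential reweighting to correct the index, transports $\nu_a$ to a positive measure $\mu_a$ on $[0,\infty)$ — of possibly infinite total mass, which is exactly where a blow-up of $\varphi$ at $0$ is absorbed, although $\int_{[0,\infty)} e^{-qs}\,d\mu_a(s)=\varphi(q)<\infty$ at every relevant $q$ — such that $\varphi(q)=\int_{[0,\infty)}e^{-qs}\,d\mu_a(s)$ for every $q$ in a cofinite part of the lattice $a\mathbb{N}$.

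It then remains to remove the dependence on $a$ and to pass from the lattice to all of $(0,\infty)$. Here I would first establish a uniqueness lemma: two positive measures on $[0,\infty)$ whose Laplace transforms agree and are finite along an arithmetic progression coincide — via $y=e^{-as}$ this reduces to the fact that a finite signed measure on $[0,1]$ with all moments zero is zero, the finiteness being guaranteed by $\int e^{-qs}\,d\mu<\infty$ at the smallest point of the progression. Taking $a=2^{-m}$, the lattices are nested, so the lemma forces $\mu_{2^{-m}}$ to be independent of $m$; call it $\mu$. Then $\varphi(q)=\int_{[0,\infty)} e^{-qs}\,d\mu(s)$ on a dense subset of $(0,\infty)$, and since both sides are continuous there — the right-hand side by dominated convergence, dominating $e^{-qs}$ by $e^{-q_0 s}$ for a smaller lattice point $q_0$ — the identity extends to all $q>0$. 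Applying the same uniqueness lemma to an arithmetic progression inside $(0,\infty)$ gives the uniqueness of $\mu$ asserted in part 1.

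The step I expect to cause the most trouble is the bookkeeping around the non-finiteness of the representing measure: $\varphi$ need not be bounded near $0$, so $\mu$ may carry infinite mass and ``mass at infinity'', which is precisely why one works with transforms along a lattice and with the substitution $x=e^{-as}$ rather than attempting to take a weak limit of probability measures. If one prefers not to invoke Hausdorff's theorem as a black box, the whole argument can instead be run through an explicit Bernstein-polynomial (Post--Widder-type) approximation $\varphi_n\to\varphi$ with associated positive measures $\mu_n$, in which case the crux becomes the uniform smallness of the tails $\int_N^\infty e^{-qs}\,d\mu_n(s)$, which is what ensures that no mass is lost in the vague limit where it should be retained.
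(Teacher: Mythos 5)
The paper does not actually prove this theorem: it is quoted as a known result with the proof deferred to \cite[Thm.~1.4]{Schilling12}, so there is nothing in the text to compare your argument against line by line. Your proof itself is correct and is the classical moment-problem route (Feller/Widder): part~2 by differentiation under the integral with the domination $\tau^{n}e^{-p\tau}\le C_{n}e^{-(a/2)\tau}$ for $p\ge a$, and part~1 by showing that $c_{n}=\varphi((n+1)a)$ is a bounded completely monotone sequence, invoking Hausdorff's moment theorem, transporting the resulting measure to $[0,\infty)$ via $x=e^{-as}$ with the reweighting $d\mu_{a}=e^{as}\,d\tilde{\nu}_{a}$, and then gluing over the nested lattices $2^{-m}\mathbb{N}$ by a uniqueness lemma that reduces (again via $y=e^{-as}$) to the determinacy of finite measures on $[0,1]$ by their moments. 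The two delicate points — the possible atom of $\nu_{a}$ at $x=0$ (mass escaping to infinity), which is why the representation is only claimed on the cofinite sublattice $\{2a,3a,\dots\}$, and the possible infinite total mass of $\mu$ when $\varphi(0+)=\infty$ — are both correctly flagged and absorbed by your lattice bookkeeping, and the final extension from dyadic rationals to all of $(0,\infty)$ by continuity (dominated convergence on the right, smoothness of $\varphi$ on the left) is sound. One cosmetic remark: the statement as printed in the paper writes $\tilde{\varphi}(p)=\int_{[0,\infty)}e^{-p\tau}\,d\mu(\tau)$ in part~1, where it should read $\varphi(p)$ on the left-hand side (as in \cite{Schilling12}); you have implicitly, and correctly, proved the intended version.
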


\begin{rem}
The class $\mathcal{CM}$ of completely monotone functions is easily
seen to be closed under pointwise addition, multiplication and convergence.
However, the composition of elements of the class $\mathcal{CM}$
is, in general, not completely monotone. 
\end{rem}

\paragraph{Stieltjes functions.}

A subclass of the completely monotone functions is the, so called
Stieltjes functions, and they play a central role in the study of
complete Bernstein functions, defined below. 
\begin{defn}
\label{def:Stieltjes-function}A non-negative function $\varphi:(0,\infty)\longrightarrow[0,\infty)$
is a Stieltjes function if it can be written in the form 
\begin{equation}
\varphi(\tau)=\frac{a}{\tau}+b+\int_{(0,\infty)}\frac{1}{\tau+t}\,d\sigma(t),\label{eq:Stieltjes1}
\end{equation}
where $a,b\ge0$ and $\sigma$ is a Borel measure on $(0,\infty)$
such that 
\begin{equation}
\int_{(0,\infty)}(1+t)^{-1}\,d\sigma(t)<\infty.\label{eq:Stieltjes2}
\end{equation}
The family of all Stieltjes functions we denote by $\mathcal{S}$. 
\end{defn}

\begin{rem}
\label{rem:Stieltjes}

\begin{enumerate}
\item The integral in \eqref{eq:Stieltjes1} is called the Stieltjes transform
of the measure $\sigma$. 
\item Using the elementary identity 
\[
(\tau+t)^{-1}=\int_{0}^{\infty}e^{-s(\tau+t)}\,ds
\]
and the Fubini theorem we see that the integral appearing in \eqref{eq:Stieltjes1}
is also a double Laplace transform and $\varphi$ may be written as
\[
\varphi(\tau)=\frac{a}{\tau}+b+\int_{0}^{\infty}e^{-\tau s}h(s)\,ds,
\]
where 
\[
h(s)=\int_{(0,\infty)}e^{-st}\,d\sigma(t)
\]
is a completely monotone function whose LT $\tilde{h}(p)$ exists
for any $p>0$. In particular, we see that $\mathcal{S}\subset\mathcal{CM}$
and $\mathcal{S}$ consists of all $\varphi\in\mathcal{CM}$ such
that its representation measure (from Theorem\ \ref{thm:Bernstein})
has a completely monotone density on $(0,\infty)$, for $\varphi\in\mathcal{S}$
is of the form 
\[
\varphi(p)=(\mathscr{L}a\cdot dt)(p)+\big(\mathscr{L}b\cdot\delta_{0}(dt)\big)(p)+\big(\mathscr{L}(\mathscr{L}\sigma)(t)dt\big)(p).
\]
\end{enumerate}
\end{rem}

\begin{example}
The following are examples of Stieltjes functions for any $\tau,t>0$
\[
\varphi_{1}(\tau)=1,\quad\varphi_{2}(\tau)=\frac{1}{\tau},\quad\varphi_{3}(\tau)=(\tau+t)^{-1},\quad\varphi_{4}(\tau)=\frac{1+t}{\tau+t},
\]
\[
\varphi_{5}(\tau)=\tau^{\alpha-1},\quad\varphi_{6}(\tau)=\frac{1}{\sqrt{\tau}}\arctan\frac{1}{\sqrt{\tau}},\quad\varphi_{7}(\tau)=\frac{1}{\tau}\log(1+\tau).
\]
\end{example}

\paragraph{Bernstein functions.}

Now we introduce the class of Bernstein functions which are closely
related to completely monotone function. Bernstein functions are also
known in probabilistic terms as Laplace exponents. 
\begin{defn}
\label{def:Bernstein-ftion}

\begin{enumerate}
\item A $C^{\infty}$-function $\varphi:(0,\infty)\longrightarrow\mathbb{R}$
is called a Bernstein function if $\varphi(\tau)\ge0$ for all $\tau>0$
and 
\[
(-1)^{n-1}\varphi^{(n)}(\tau)\ge0,\quad\forall n\in\mathbb{N},\;\tau>0.
\]
\item Equivalently, a function $\varphi:(0,\infty)\longrightarrow\mathbb{R}$
is a Bernstein function, if, and only if, it admits the representation
\begin{equation}
\varphi(\tau)=a+b\tau+\int_{(0,\infty)}(1-e^{-\tau t})d\mu(t),\label{eq:Bernstein-ftion}
\end{equation}
where $a,b\ge0$ and $\mu$ is a Borel measure on $(0,\infty)$, called
the L\'evy measure, satisfying 
\begin{equation}
\int_{(0,\infty)}(1\wedge t)\,d\mu(t)<\infty.\label{eq:Levy-measure-BF}
\end{equation}
The L\'evy triplet $(a,b,\mu)$ determines $\varphi$ uniquely and vice
versa. In particular, 
\[
a=\varphi(0+),\qquad b=\lim_{\tau\to\infty}\frac{\varphi(\tau)}{\tau}.
\]
\item The class of Bernstein function will be denoted by $\mathcal{BF}$. 
\end{enumerate}
\end{defn}

The following structural characterization theorem of Bernstein functions
is due to Bochner, see \cite[Thm~3.7]{Schilling12} for the proof. 
\begin{thm}
\label{thm:chara-Bernstein-ftions}Let $\varphi:(0,\infty)\longrightarrow\mathbb{R}$
be a positive function. The following assertions are equivalent.

\begin{enumerate}
\item $\varphi\in\mathcal{BF}$. 
\item $f\circ\varphi\in\mathcal{CM}$, for every $f\in\mathcal{CM}$. 
\item $e^{-\tau\varphi}\in\mathcal{CM}$ for every $\tau>0$. 
\end{enumerate}
\end{thm}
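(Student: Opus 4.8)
The plan is to establish the three equivalences by proving the links $(1)\Rightarrow(3)$, $(3)\Rightarrow(2)$, $(2)\Rightarrow(3)$ and $(3)\Rightarrow(1)$, which together give $(1)\Leftrightarrow(3)\Leftrightarrow(2)$. The only substantial steps are $(1)\Rightarrow(3)$ and $(3)\Rightarrow(1)$; the ingredients are Bernstein's theorem (Theorem~\ref{thm:Bernstein}), the L\'evy--Khintchine representation \eqref{eq:Bernstein-ftion} of Bernstein functions, the fact recorded in the remark after Theorem~\ref{thm:Bernstein} that $\mathcal{CM}$ is closed under sums, products and pointwise limits, and the analogous closure of $\mathcal{BF}$ under pointwise limits.

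For $(1)\Rightarrow(3)$ one cannot simply compose $\lambda\mapsto e^{-\tau\lambda}\in\mathcal{CM}$ with $\varphi$, because $\mathcal{CM}$ is not closed under composition; instead I would exploit the additive structure $\varphi(t)=a+bt+\int_{(0,\infty)}(1-e^{-tu})\,d\mu(u)$ and write $e^{-\tau\varphi(t)}$ as the product of the constant $e^{-\tau a}$, the function $e^{-\tau b t}\in\mathcal{CM}$, and $g(t):=\exp\bigl(-\tau\int_{(0,\infty)}(1-e^{-tu})\,d\mu(u)\bigr)$. For $g$ I would truncate: for $\varepsilon>0$ the measure $\mu_\varepsilon:=\mu|_{(\varepsilon,\infty)}$ is finite (since $\int(1\wedge u)\,d\mu(u)<\infty$), so the corresponding factor $g_\varepsilon$ equals $e^{-\tau\mu_\varepsilon((\varepsilon,\infty))}\exp(\tau h_\varepsilon)$ with $h_\varepsilon(t)=\int_{(\varepsilon,\infty)}e^{-tu}\,d\mu_\varepsilon(u)\in\mathcal{CM}$ by Bernstein's theorem; expanding $\exp(\tau h_\varepsilon)=\sum_{k\ge0}\frac{\tau^k}{k!}h_\varepsilon^{k}$ and using closure of $\mathcal{CM}$ under products, sums and pointwise limits gives $g_\varepsilon\in\mathcal{CM}$. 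Letting $\varepsilon\downarrow0$, monotone convergence in the exponent gives $g_\varepsilon(t)\to g(t)$ for every $t$, and closure of $\mathcal{CM}$ under pointwise limits yields $g\in\mathcal{CM}$, hence $e^{-\tau\varphi}\in\mathcal{CM}$.

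The link $(3)\Rightarrow(2)$ is then short: for $f\in\mathcal{CM}$, Bernstein's theorem gives a measure $\nu$ on $[0,\infty)$ with $f(\lambda)=\int_{[0,\infty)}e^{-\lambda s}\,d\nu(s)$, so $f(\varphi(t))=\int_{[0,\infty)}e^{-s\varphi(t)}\,d\nu(s)$ is a positive mixture of the functions $e^{-s\varphi}\in\mathcal{CM}$ supplied by (3); that such a mixture lies in $\mathcal{CM}$ I would justify by differentiating under the integral sign on compact $s$-intervals (where $\nu$ is finite and the derivatives locally bounded) and passing to the monotone limit. The link $(2)\Rightarrow(3)$ is the trivial specialization $f(\lambda)=e^{-\tau\lambda}$. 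Finally, for $(3)\Rightarrow(1)$: $g_\tau:=e^{-\tau\varphi}\in\mathcal{CM}$ is $C^\infty$, hence so is $\varphi=-\tau^{-1}\log g_\tau$; since $\varphi\ge0$ we have $0\le 1-g_\tau$, and $(-1)^{n-1}(1-g_\tau)^{(n)}=(-1)^{n}g_\tau^{(n)}\ge0$ for $n\ge1$, so $1-e^{-\tau\varphi}\in\mathcal{BF}$ and thus $\psi_\tau:=\tau^{-1}(1-e^{-\tau\varphi})\in\mathcal{BF}$; as $\tau\downarrow0$ one has $\psi_\tau(t)\to\varphi(t)$ for each $t>0$, and closure of $\mathcal{BF}$ under pointwise limits gives $\varphi\in\mathcal{BF}$.

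The main obstacle is $(1)\Rightarrow(3)$: because complete monotonicity is destroyed by composition, it must be recovered from the additive L\'evy--Khintchine structure of $\varphi$ via the truncation-and-series argument above, where the delicate points are the finiteness of the truncated L\'evy measures and the convergence of the exponential series that makes the closure properties of $\mathcal{CM}$ applicable. The one ingredient not already proved in the excerpt is the closure of $\mathcal{BF}$ under pointwise limits (used in $(3)\Rightarrow(1)$), which itself follows from the integral representation \eqref{eq:Bernstein-ftion} together with a Helly-type selection argument.
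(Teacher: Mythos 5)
The paper itself gives no proof of this theorem; it is quoted from \cite[Thm~3.7]{Schilling12}, so there is no in-paper argument to compare against. Your chain $(1)\Rightarrow(3)\Rightarrow(2)\Rightarrow(3)$ and $(3)\Rightarrow(1)$ is sound, and the main implication $(1)\Rightarrow(3)$ is handled correctly: splitting off $e^{-\tau a}$ and $e^{-\tau bt}$, truncating the L\'evy measure away from the origin so that it becomes finite, writing the truncated factor as $e^{-\tau\mu((\varepsilon,\infty))}\exp(\tau h_\varepsilon)$ with $h_\varepsilon\in\mathcal{CM}$, and using closure of $\mathcal{CM}$ under products, sums and pointwise limits is a complete argument. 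This differs from the textbook route, which proves $(1)\Rightarrow(2)$ in one stroke by the Fa\`a di Bruno formula for $(f\circ\varphi)^{(n)}$ and a sign count; your version needs the representation \eqref{eq:Bernstein-ftion} as input but avoids the combinatorics, while the Fa\`a di Bruno version works straight from the derivative definition of $\mathcal{BF}$. In $(3)\Rightarrow(2)$, to differentiate $\int e^{-s\varphi(t)}\,d\nu(s)$ under the integral you should invoke the standard estimate $0\le(-1)^n g^{(n)}(t)\le\bigl(n/(e(t-t_0))\bigr)^n g(t_0)$, valid for every $g\in\mathcal{CM}$ and $t>t_0$; applied to $g=e^{-s\varphi}$ with $g(t_0)\le1$ it supplies the uniform domination in $s$ that your sketch presupposes.

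The one soft spot is $(3)\Rightarrow(1)$. Your reduction to $\psi_\tau=\tau^{-1}(1-e^{-\tau\varphi})\in\mathcal{BF}$ with $\psi_\tau\to\varphi$ pointwise is correct, but the closure of $\mathcal{BF}$ under pointwise limits that you then invoke is not an innocent auxiliary fact here: its usual proof runs through precisely the equivalence $(1)\Leftrightarrow(3)$ being established (one passes to $e^{-u\psi_n}\in\mathcal{CM}$ and back), so to avoid circularity you would have to carry out the Helly-type compactness argument on the representing triplets in full, which is a piece of work comparable to the rest of the proof. It is cleaner to bypass the lemma altogether: $\varphi$ is already known to be $C^\infty$, and for $n\ge1$ Fa\`a di Bruno gives $\psi_\tau^{(n)}=-\tau^{-1}\bigl(e^{-\tau\varphi}\bigr)^{(n)}=e^{-\tau\varphi}\bigl(\varphi^{(n)}+O(\tau)\bigr)$ pointwise, so that $(-1)^{n-1}\varphi^{(n)}(t)=\lim_{\tau\downarrow0}(-1)^{n-1}\psi_\tau^{(n)}(t)\ge0$ and $\varphi\in\mathcal{BF}$ directly from Definition~\ref{def:Bernstein-ftion}. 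With that replacement the proof is complete and self-contained.
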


\begin{example}
The following are Bernstein functions 
\begin{multline*}
\varphi_{1}(\tau)=\tau^{\alpha},\;0<\alpha<1,\hspace*{1em}\mathrm{or}\hspace*{1em}\varphi_{2}(\tau)=\frac{\tau}{1+\tau},\hspace*{1em}\mathrm{or}\hspace*{1em}\varphi_{3}(\tau)=\log(1+\tau)
\end{multline*}
which are obtained as an integral mixture of the extremal Bernstein
functions 
\[
e_{0}(\tau)=\tau,\qquad e_{t}(\tau)=\frac{1+t}{t}(1-e^{-\tau t}),\;0<t<\infty,\quad\mathrm{and}\quad e_{\infty}(\tau)=1
\]
by the measures $d\mu(t)=\frac{\alpha}{\Gamma(1-\alpha)}t^{-1-\alpha}$,
$0<\alpha<1$, $d\mu(t)=e^{-t}$ and $d\mu(t)=t^{-1}e^{-t}$, respectively. 
\end{example}

\paragraph{Complete Bernstein functions.}

Finally, we introduce the fourth class of functions, so called complete
Bernstein functions, which are Bernstein functions where the L{\'e}vy
measure $\mu$ in the representation \eqref{eq:Bernstein-ftion} has
a nice density. 
\begin{defn}
\label{def:Complete-Bernstein-ftion}A Bernstein function $\varphi$
is said to be a complete Bernstein function if its L{\'e}vy measure $\mu$
in \eqref{eq:Bernstein-ftion} has a density $\rho$ with respect
to the Lebesgue measure with $\rho\in\mathcal{CM}$. Thus, \eqref{eq:Bernstein-ftion}
takes the form 
\begin{equation}
\varphi(\tau)=a+b\tau+\int_{0}^{\infty}(1-e^{-\tau t})\rho(t)\,dt,\label{eq:CBF}
\end{equation}
such that by \eqref{eq:Levy-measure-BF} we have 
\[
\int_{0}^{\infty}(1\wedge t)\rho(t)\,dt<\infty.
\]
The class of complete Bernstein functions we denote by $\mathcal{CBF}$. 
\end{defn}

The following theorem gives the characterization of complete Bernstein
functions, cf.\ \cite[Thm~6.2]{Schilling12} 
\begin{thm}
\label{thm:caract-cbf}Let $\varphi:(0,\infty)\longrightarrow\mathbb{R}$
be a given non-negative function, then the following expression are
equivalent.

\begin{enumerate}
\item $\varphi\in\mathcal{CBF}$. 
\item The function $(0,\infty)\ni\tau\mapsto\tau^{-1}\varphi(\tau)$ belongs
to $\mathcal{S}$. 
\item There exists a Bernstein function $\psi$ such that 
\[
\varphi(\tau)=\tau^{2}(\mathscr{L}\psi)(\tau),\quad\tau>0.
\]
\item $\varphi$ has an analytic continuation to the upper plane $\mathbb{C}_{>0}:=\{z\in\mathbb{C}\,|\,\Im z>0\}$
such that $\Im\varphi(z)\ge0$ for all $z\in\mathbb{C}_{>0}$ and
the limit $\varphi(0+)=\lim_{\tau\downarrow0}\varphi(\tau)$ exists
and is real. 
\item $\varphi$ has an analytic continuation to the cut complex plane $\mathbb{C}\backslash(\infty,0]$
such that $\Im z\cdot\Im\varphi(z)\ge0$ for all $z\in\mathbb{C}\backslash(\infty,0]$
and the limit $\varphi(0+)=\lim_{\tau\downarrow0}\varphi(\tau)$ exists
and is real. 
\item $\varphi$ has an analytic continuation to $\mathbb{C}_{>0}$ which
is given by 
\begin{equation}
\varphi(z)=a+bz+\int_{(0,\infty)}\frac{z}{z+t}\,d\sigma(t),\label{eq:representation-CBF}
\end{equation}
where $a,b\ge0$ and $\sigma$ is a Borel measure on $(0,\infty)$
satisfying \eqref{eq:Stieltjes2}. 
\end{enumerate}
\end{thm}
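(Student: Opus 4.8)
The plan is to establish the six assertions through a web of implications that splits into an \emph{algebraic core} $(1)\Leftrightarrow(2)\Leftrightarrow(6)$ together with $(3)\Leftrightarrow(6)$, handled by manipulating the Laplace, Stieltjes and L\'evy--Khintchine representations, and an \emph{analytic part} $(6)\Rightarrow(5)\Rightarrow(4)\Rightarrow(6)$, handled via analytic continuation and the Nevanlinna--Herglotz representation of Pick functions.

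For the algebraic core I would proceed as follows. $(1)\Rightarrow(6)$: given $\varphi\in\mathcal{CBF}$ with $\mathcal{CM}$-density $\rho$ as in \eqref{eq:CBF}, write $\rho(t)=\int_{[0,\infty)}e^{-ts}\,d\mu(s)$ via Theorem~\ref{thm:Bernstein} (the condition $\int_0^\infty(1\wedge t)\rho(t)\,dt<\infty$ forces $\mu(\{0\})=0$, since otherwise $\rho$ is bounded below by a positive constant), interchange the order of integration using $\int_0^\infty(1-e^{-\tau t})e^{-ts}\,dt=\frac{\tau}{s(s+\tau)}$, and set $d\sigma(s):=s^{-1}\,d\mu(s)$; a Tonelli estimate ($\int_0^\infty(1\wedge t)e^{-st}\,dt\asymp\frac{1}{s(1+s)}$) converts $\int_0^\infty(1\wedge t)\rho(t)\,dt<\infty$ into \eqref{eq:Stieltjes2} for $\sigma$, and the formula $\varphi(\tau)=a+b\tau+\int_{(0,\infty)}\frac{\tau}{\tau+s}\,d\sigma(s)$ continues holomorphically to $\mathbb{C}\setminus(-\infty,0]$ because $z\mapsto\frac{z}{z+s}$ does. $(6)\Rightarrow(2)$: divide \eqref{eq:representation-CBF} by $z$ to land exactly on the form of Definition~\ref{def:Stieltjes-function}. $(2)\Rightarrow(1)$: by Remark~\ref{rem:Stieltjes}(2) the Stieltjes integral occurring in $\tau^{-1}\varphi(\tau)$ equals $\int_0^\infty e^{-\tau s}h(s)\,ds$ with $h\in\mathcal{CM}$; multiplying by $\tau$ and using $1-e^{-\tau s}=\tau\int_0^s e^{-\tau u}\,du$ (equivalently an integration by parts, and $h(\infty)=0$) rewrites $\varphi$ in the form \eqref{eq:CBF} with density $-h'\in\mathcal{CM}$, the L\'evy condition \eqref{eq:Levy-measure-BF} following from \eqref{eq:Stieltjes2}. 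Finally $(3)\Leftrightarrow(6)$: recall $\psi\in\mathcal{BF}$ iff $\psi\ge0$ and $\psi'\in\mathcal{CM}$, so $\psi'=\mathscr{L}\lambda$ and $\mathscr{L}\psi(\tau)=\psi(0)/\tau+\tau^{-1}\mathscr{L}\psi'(\tau)=\psi(0)/\tau+\tau^{-1}\int_{[0,\infty)}(\tau+t)^{-1}\,d\lambda(t)$; hence $\tau^2\mathscr{L}\psi(\tau)=\lambda(\{0\})+\psi(0)\tau+\int_{(0,\infty)}\frac{\tau}{\tau+t}\,d\lambda(t)$, which is \eqref{eq:representation-CBF} with $a=\lambda(\{0\})$, $b=\psi(0)$, $\sigma=\lambda|_{(0,\infty)}$ (the integrability of $\sigma$ being equivalent to $\psi$ having a genuine L\'evy measure), and the identification is reversible.

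For the analytic part, $(6)\Rightarrow(5)\Rightarrow(4)$ is immediate once one notes that the representation in \eqref{eq:representation-CBF} already defines a holomorphic function on $\mathbb{C}\setminus(-\infty,0]\supset\mathbb{C}_{>0}$, that $\Im\frac{z}{z+t}=\frac{t\,\Im z}{|z+t|^2}$ has the sign of $\Im z$, and that $\varphi(0+)=a$ is real. The substantive implication is $(4)\Rightarrow(6)$: $\varphi$ maps $\mathbb{C}_{>0}$ holomorphically into the closed upper half-plane, so by the Nevanlinna--Herglotz representation of Pick functions $\varphi(z)=\alpha+\beta z+\int_{\mathbb{R}}\bigl(\frac{1}{t-z}-\frac{t}{1+t^2}\bigr)\,d\nu(t)$ with $\alpha\in\mathbb{R}$, $\beta\ge0$ and $\int_{\mathbb{R}}(1+t^2)^{-1}\,d\nu(t)<\infty$; since $\varphi$ extends continuously and takes real values on $(0,\infty)$, the Stieltjes inversion formula forces $\nu((0,\infty))=0$, i.e.\ $\operatorname{supp}\nu\subseteq(-\infty,0]$; the existence of a finite (indeed nonnegative) limit $\varphi(0+)$ rules out an atom of $\nu$ at $0$, which would contribute a term $-\nu(\{0\})/z$ blowing up as $\tau\downarrow0$; and substituting $t=-s$ together with $\frac{1}{-s-z}=\frac{1}{s}\bigl(\frac{z}{z+s}-1\bigr)$ collapses the representation to $\varphi(z)=a+\beta z+\int_{(0,\infty)}\frac{z}{z+s}\,d\sigma(s)$, where $d\sigma(s)=s^{-1}\,d\tilde\nu(s)$ with $\tilde\nu$ the image of $\nu|_{(-\infty,0)}$ under $t\mapsto-t$, $a=\varphi(0+)\ge0$, and where $\int_{(0,\infty)}(1+s)^{-1}\,d\sigma(s)<\infty$ is extracted from $\int(1+s^2)^{-1}\,d\tilde\nu(s)<\infty$ near infinity and from the finiteness of $\varphi(0+)$ (monotone convergence as $\tau\downarrow0$) near the origin.

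The main obstacle is precisely this last step $(4)\Rightarrow(6)$: the Nevanlinna--Herglotz representation theorem is the one genuinely nontrivial external input, and the bookkeeping that follows it — eliminating the singular term at the origin by using $\varphi\ge0$ near $0$, identifying the correct representing measure $\sigma$ on $(0,\infty)$, and checking the integrability \eqref{eq:Stieltjes2} — needs some care. All the remaining implications reduce to Tonelli/Fubini, the elementary integral identities displayed above, and integration by parts.
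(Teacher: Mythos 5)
The paper does not prove this theorem at all — it is quoted verbatim from the literature with the citation \cite[Thm~6.2]{Schilling12} — so there is no in-paper argument to compare against. Your proposal is correct and is essentially the standard proof from that reference: the Laplace/Stieltjes/L\'evy--Khintchine bookkeeping for $(1)\Leftrightarrow(2)\Leftrightarrow(3)\Leftrightarrow(6)$ and the Nevanlinna--Herglotz (Pick function) representation plus Stieltjes inversion for $(4)\Rightarrow(6)$, with the atom of the representing measure at the origin excluded by the finiteness of $\varphi(0+)$ and the integrability \eqref{eq:Stieltjes2} near zero obtained by monotone convergence.
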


\begin{rem}
The constants $a,b$ appearing in both representations \eqref{eq:representation-CBF}
and \eqref{eq:CBF} are the same. The relation between the density
$\rho$ appearing in \eqref{eq:CBF} of the function $\varphi\in\mathcal{CBF}$
and the measure $\sigma$ corresponding to the Stieltjes function
$\psi(\tau)=\tau^{-1}\varphi(\tau)$ is given by 
\[
\rho(\tau)=\int_{(0,\infty)}e^{-\tau t}t\,d\sigma(t).
\]
\end{rem}

The next theorem shows certain nonlinear properties of the class $\mathcal{CBF}$
which gives rise to many applications of this class. Below we use
the shorthand notation $\mathcal{CBF}\circ\mathcal{S}\subset\mathcal{S}$
to indicate that the composition of any $\varphi\in\mathcal{CBF}$
and $f\in\mathcal{S}$ is an element of $\mathcal{S}$, etc. 
\begin{thm}
\label{thm:CBF-properties}

\begin{enumerate}
\item $\varphi\in\mathcal{CBF}\backslash\{0\}$ if, and only if, $\varphi^{*}(\tau):=\tau/\varphi(\tau)$
belongs to $\mathcal{CBF}$. The call $(\varphi,\varphi^{*})$ the
conjugate pair of complete Bernstein functions. 
\item A function $\varphi\not\equiv0$ is a complete Bernstein function
if, and only if, $1/\varphi$ is a non-trivial Stieltjes function. 
\item $\varphi\in\mathcal{CBF}$ if, and only if, $(\tau+\varphi)^{-1}\in\mathcal{S}$
for every $\tau>0$. 
\item $\mathcal{CBF}\circ\mathcal{S}\subset\mathcal{S}$. 
\item $\mathcal{S}\circ\mathcal{CBF}\subset\mathcal{S}$. 
\item $\mathcal{CBF}\circ\mathcal{CBF}\subset\mathcal{CBF}$. 
\item $\mathcal{S}\circ\mathcal{S}\subset\mathcal{CBF}$. 
\end{enumerate}
\end{thm}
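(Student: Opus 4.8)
The plan is to reduce each of the seven assertions to the half-plane (Pick-function) descriptions of the two classes and then to chain the elementary identities $\varphi^{*}(\tau)=\tau/\varphi(\tau)$, $\tau\mapsto1/\varphi(\tau)$ and $1/(g\circ h)=(1/g)\circ h$. Recall from Theorem~\ref{thm:caract-cbf}(5) that a non-negative $\varphi$ on $(0,\infty)$ lies in $\mathcal{CBF}$ precisely when it extends holomorphically to $\mathbb{C}\setminus(-\infty,0]$ with $\Im z\cdot\Im\varphi(z)\ge0$ there and with a real limit $\varphi(0+)$; dually, a non-negative $f$ on $(0,\infty)$ lies in $\mathcal{S}$ precisely when it extends holomorphically to $\mathbb{C}\setminus(-\infty,0]$ with $\Im z\cdot\Im f(z)\le0$ (the classical Stieltjes analogue, see \cite{Schilling12}). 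I would first record, via the open mapping theorem, that a non-constant complete Bernstein function never vanishes on $\mathbb{C}\setminus(-\infty,0]$ — a zero in the open upper half-plane would force $\varphi$ constant, and a zero on $(0,\infty)$ is excluded because there $\varphi\ge0$ and $\varphi$ is an open map — and that, likewise, a non-trivial Stieltjes function is zero-free there and maps $(0,\infty)$ into $(0,\infty)$. This zero-freeness is exactly what makes all the reciprocals and compositions below well defined as holomorphic maps on the cut plane.

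With this dictionary, item~(2) is the computation $\Im(1/\varphi)=-\Im\varphi/|\varphi|^{2}$: if $\varphi\in\mathcal{CBF}\setminus\{0\}$ then $1/\varphi$ is holomorphic and nonzero on $\mathbb{C}\setminus(-\infty,0]$, satisfies $\Im z\cdot\Im(1/\varphi(z))\le0$, and is positive on $(0,\infty)$, hence $1/\varphi\in\mathcal{S}$ and is non-trivial; the converse is the same identity read the other way. Item~(1) then follows from Theorem~\ref{thm:caract-cbf}(2): since $\tau^{-1}\varphi^{*}(\tau)=1/\varphi(\tau)\in\mathcal{S}$ we get $\varphi^{*}\in\mathcal{CBF}$, and the assignment is an involution, $(\varphi^{*})^{*}=\varphi$, so the equivalence is symmetric. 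Item~(3): in the forward direction $\tau+\varphi$ is a non-trivial complete Bernstein function, being the sum of a non-negative constant and $\varphi$, so $(\tau+\varphi)^{-1}\in\mathcal{S}$ by~(2); conversely, if $(\tau+\varphi)^{-1}\in\mathcal{S}$ for every $\tau>0$ then $\tau+\varphi\in\mathcal{CBF}$ by~(2), and subtracting the real constant $\tau$ affects neither the holomorphic continuation nor the sign condition, while $\varphi\ge0$ on $(0,\infty)$ by hypothesis, so $\varphi\in\mathcal{CBF}$.

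For the compositions I would prove (6) and (4) directly from the half-plane picture and deduce (5) and (7) from them. For (6): if $\varphi,\psi\in\mathcal{CBF}$ with $\psi$ non-constant (the constant case being trivial), then $\psi$ maps $\mathbb{C}_{>0}$ into $\mathbb{C}_{>0}$ and maps $(0,\infty)$ into $(0,\infty)$, so $\varphi\circ\psi$ is holomorphic on $\mathbb{C}_{>0}$ with values in the closed upper half-plane, is non-negative on $(0,\infty)$, and has the real limit $\varphi(\psi(0+))$ at the origin; hence $\varphi\circ\psi\in\mathcal{CBF}$ by Theorem~\ref{thm:caract-cbf}(5). For (4): a Stieltjes function $f$ maps $\mathbb{C}_{>0}$ into the open lower half-plane, while a complete Bernstein function $\varphi$, being real on $(0,\infty)$, extends by Schwarz reflection so as to map the lower half-plane into the closed lower half-plane; hence $\varphi\circ f$ maps $\mathbb{C}_{>0}$ into the closed lower half-plane, is non-negative on $(0,\infty)$, and so belongs to $\mathcal{S}$. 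Now (5) and (7) drop out through the reciprocal identity together with (2),(4),(6): for $f\in\mathcal{S}$, $\varphi\in\mathcal{CBF}$ one has $1/(f\circ\varphi)=(1/f)\circ\varphi\in\mathcal{CBF}\circ\mathcal{CBF}\subseteq\mathcal{CBF}$, whence $f\circ\varphi\in\mathcal{S}$; and for $f,g\in\mathcal{S}$ one has $1/(f\circ g)=(1/f)\circ g\in\mathcal{CBF}\circ\mathcal{S}\subseteq\mathcal{S}$, whence $f\circ g\in\mathcal{CBF}$.

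The main obstacle is item~(2) — the clean passage between the $\mathcal{CBF}$ and $\mathcal{S}$ half-plane characterisations: one must establish the zero-freeness of a non-trivial complete Bernstein function on $\mathbb{C}\setminus(-\infty,0]$ so that $1/\varphi$ is genuinely holomorphic there, and then match the boundary behaviour at $0$ and at $\infty$ (including the degenerate cases where the constants $a$ or $b$ vanish) with the definition of a Stieltjes function. Everything else is bookkeeping with half-planes and reflections once this and the Pick-function dictionary are in place; alternatively, all seven statements may simply be quoted from \cite[Ch.~7]{Schilling12}.
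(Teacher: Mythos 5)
Your proposal is correct, but there is nothing in the paper to compare it against: Theorem~\ref{thm:CBF-properties} is stated in the appendix as background and is not proved there --- the paper simply refers the reader to \cite{Schilling12} (where these are, essentially, Proposition~7.1, Corollary~7.4 and Theorem~7.5). Your argument is a faithful reconstruction of the standard proof in that reference, organised around the Nevanlinna--Pick characterisation of $\mathcal{CBF}$ (Theorem~\ref{thm:caract-cbf}(4)--(5)) and its ``anti-Pick'' counterpart for $\mathcal{S}$. The logical skeleton is sound: zero-freeness on the cut plane makes $1/\varphi$ holomorphic and flips the sign of the imaginary part, giving (2); (1) follows from Theorem~\ref{thm:caract-cbf}(2) and the involution $(\varphi^{*})^{*}=\varphi$; (3) reduces to (2) after adding the constant; (4) and (6) are direct half-plane bookkeeping; and (5), (7) follow by taking reciprocals inside the composition. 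Two small points deserve attention. First, the half-plane characterisation of $\mathcal{S}$ (nonnegative on $(0,\infty)$, holomorphic on $\mathbb{C}\setminus(-\infty,0]$ with $\Im z\cdot\Im f(z)\le0$) is \emph{not} stated anywhere in the paper, so it must be imported explicitly from \cite{Schilling12} --- your proof genuinely depends on it and it is the only non-elementary input. Second, your open-mapping argument for zero-freeness on $(0,\infty)$ can be replaced by something more elementary: a complete Bernstein function is in particular a Bernstein function, hence non-decreasing on $(0,\infty)$ with $\varphi(0+)=a\ge0$, so $\varphi(\tau_{0})=0$ forces $\varphi\equiv0$ on $(0,\tau_{0}]$ and then everywhere by the identity theorem. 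With that, and a word on the degenerate constant cases in (4)--(7) (which you partly address), the proof is complete.
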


We conclude this subsection with some examples of elements in the
class $\mathcal{CBF}$. 
\begin{example}
The following are typical examples of complete Bernstein functions
\begin{multline*}
\varphi_{1}(\tau)=1,\qquad\varphi_{2}(\tau)=\tau,\qquad\mathrm{and}\qquad\varphi_{3}(\tau)=\frac{\tau}{\tau+t},\;0<t<\infty.
\end{multline*}
Using the representation \eqref{eq:representation-CBF} with Stieltjes
measures $\sigma$ of the forms 
\[
\frac{1}{\pi}\sin(\alpha\pi)t^{\alpha-1}dt,\quad1\!\!1_{(0,1)}(t)\frac{dt}{2\sqrt{t}}\quad\text{and \ensuremath{\frac{1}{t}1\!\!1_{(1,\infty)}(t)dt}},
\]
we see that the functions 
\[
\varphi_{4}(\tau)=\tau^{\alpha},\;0<\alpha<1,\quad\varphi_{5}(\tau)=\sqrt{\tau}\arctan\frac{1}{\sqrt{\tau}},\quad\text{and \ensuremath{\varphi_{6}(\tau)=\log(1+\tau)}}
\]
are also complete Bernstein functions. 
\end{example}

\section{The Karamata Tauberian Theorem}

\label{sec:Karamata-thm}Tauberian theorems deals with the deduction
of the asymptotic behavior of functions from a certain class (regular
varying in the original of Karamata \cite{Karamata1933}) from the
asymptotic behavior of their transforms (e.g.\ their Laplace-Stieltjes
transforms). We refer to \cite[Sec.~2.2]{Seneta1976} and \cite{Bingham1987}
for more details and proofs.

Let $A>0$ be given and denote by $\mathcal{F}_{+}(A)$ the class
of positive measurable functions defined on $[A,\infty)$. 
\begin{defn}[Regular and slowly varying functions]
\label{def:RV-functions}Let $f\in\mathcal{F}_{+}(A)$ be given.
We say that $f$ is

\begin{enumerate}
\item \emph{regular varying} (RV) \emph{at infinity} in the sense of Karamata
if the limit 
\[
K_{f}(\lambda):=\lim_{x\to\infty}\frac{f(\lambda x)}{f(x)}
\]
exists and is finite for all $\lambda>0$. 
\item \emph{slowly varying} (SV) if 
\[
K_{f}(\lambda)=1,\quad\forall\lambda>0.
\]
\end{enumerate}
\end{defn}

\begin{prop}
Let $f\in\mathcal{F}_{+}(A)$ be a RV function. 

\begin{enumerate}
\item Then there is a real number $\rho$ (called index of the function
$f$) such that 
\[
K_{f}(\lambda)=\lambda^{\rho},\quad\lambda>0.
\]
The index $\rho=0$ characterizes the SV functions, that is $K_{f}(\lambda)=\lambda^{0}=1$\textcompwordmark . 
\item Any RV function $f$ of index $\rho$ is represented as 
\[
f(x)=x^{\rho}l(x),\quad\forall x\ge A,
\]
where $l$ is a corresponding SV function. 
\end{enumerate}
\end{prop}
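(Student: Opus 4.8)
The plan is to get part (1) from the multiplicative Cauchy functional equation for the limit function, and then read off part (2) by a one-line substitution.

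First I would fix $\lambda,\mu>0$ and, using that $\mu x\to\infty$ as $x\to\infty$, write
\[
K_f(\lambda\mu)=\lim_{x\to\infty}\frac{f(\lambda\mu x)}{f(x)}=\lim_{x\to\infty}\frac{f(\lambda\mu x)}{f(\mu x)}\cdot\frac{f(\mu x)}{f(x)}=K_f(\lambda)\,K_f(\mu).
\]
Thus $g:=K_f$ satisfies $g(\lambda\mu)=g(\lambda)g(\mu)$ on $(0,\infty)$. Since $f>0$ each ratio is positive, so $g\ge0$; and $g(\lambda)g(1/\lambda)=g(1)=1$ together with the assumed finiteness of the limit forces $0<g(\lambda)<\infty$ for every $\lambda$, which is exactly what makes a logarithm legitimate below. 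Next I would check that $g$ is Lebesgue measurable: because the limit defining $K_f(\lambda)$ exists, it agrees with the limit along the integers, $g(\lambda)=\lim_{n\to\infty}f(n\lambda)/f(n)$, and for each fixed $n$ the map $\lambda\mapsto f(n\lambda)/f(n)$ is measurable by measurability of $f$; a pointwise limit of measurable functions is measurable.

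Passing to $h(u):=\log g(e^{u})$ turns the multiplicative equation into the additive Cauchy equation $h(u+v)=h(u)+h(v)$ with $h$ measurable on $\mathbb{R}$. Invoking the classical structure theorem for measurable solutions of Cauchy's functional equation (see \cite{Bingham1987}, \cite{Seneta1976}) gives $h(u)=\rho u$ for some $\rho\in\mathbb{R}$, i.e.\ $K_f(\lambda)=\lambda^{\rho}$; the value $\rho=0$ is precisely the case $K_f\equiv1$, which is the definition of a slowly varying function, proving (1). For (2), set $l(x):=x^{-\rho}f(x)$ for $x\ge A$. This is a positive measurable function with $f(x)=x^{\rho}l(x)$, and for every $\lambda>0$
\[
\frac{l(\lambda x)}{l(x)}=\lambda^{-\rho}\,\frac{f(\lambda x)}{f(x)}\longrightarrow\lambda^{-\rho}\lambda^{\rho}=1,\qquad x\to\infty,
\]
so $l$ is slowly varying, which is the asserted representation.

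The only ingredient that is not a routine manipulation of limits, and the step I would be most careful to state precisely, is the appeal to the theorem that a Lebesgue-measurable additive function on $\mathbb{R}$ is linear (equivalently, that a measurable multiplicative $g$ on $(0,\infty)$ is a power); for completeness one should also flag the small point that the a priori assumption "$K_f(\lambda)$ exists and is finite", combined with $K_f(\lambda)K_f(1/\lambda)=1$, is what excludes the degenerate values $0$ and $+\infty$ and so legitimizes both the logarithm and the division by $x^{\rho}$ in the last step.
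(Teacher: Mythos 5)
Your proof is correct and complete: the multiplicative Cauchy equation for $K_f$, the measurability of $K_f$ as a pointwise limit along integers, the reduction to the additive Cauchy equation via $h(u)=\log K_f(e^{u})$, and the definition $l(x):=x^{-\rho}f(x)$ for part (2) are all sound, and you correctly flag that finiteness of the limit together with $K_f(\lambda)K_f(1/\lambda)=1$ rules out the values $0$ and $+\infty$. The paper itself gives no proof of this proposition; it simply cites \cite[Sec.~2.2]{Seneta1976} and \cite{Bingham1987}, and your argument is exactly the classical Characterisation Theorem proof found in those references (in the stronger form given there one only needs the limit to exist on a set of positive measure, but under the paper's hypothesis that $K_f(\lambda)$ exists finitely for every $\lambda>0$ your simpler route suffices).
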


We say that the functions $f$ and $g$ are \emph{asymptotically equivalent
at infinity}, and denote $f\sim g$ as $x\to\infty$, meaning that
\[
\lim_{x\to\infty}\frac{f(x)}{g(x)}=1.
\]

\begin{thm}[Karamata's Tauberian Theorem]
\label{thm:Karamata}Let $U:[0,\infty)\longrightarrow\mathbb{R}$
be a monotone non-decreasing function such that 
\[
w(x):=\int_{0}^{\infty}e^{-xs}\,dU(s)<\infty,\quad\forall x>0.
\]
Then, if $\rho\ge0$ and $L$ is a slowly varying function 

\begin{enumerate}
\item[(i)] $w(x)=x^{-\rho}L\left(\frac{1}{x}\right)$ as $x\to0^{+}$ $\Longrightarrow$
$U(x)\sim x^{\rho}L(x)/\Gamma(\rho+1)$ as $x\to\infty$; 
\item[(ii)] $w(x)=x^{-\rho}L(x)$ as $x\to\infty$ $\Longrightarrow$ $U(x)\sim x^{\rho}L\left(\frac{1}{x}\right)/\Gamma(\rho+1)$
as $x\to0^{+}$. 
\end{enumerate}
\end{thm}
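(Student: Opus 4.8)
The plan is to deduce both parts from the extended continuity theorem for Laplace--Stieltjes transforms of positive measures, together with the fact that a slowly varying $L$ satisfies $L(cx)/L(x)\to1$ as $x\to\infty$ for each fixed $c>0$. By the obvious symmetry between "$x\to0^{+}$ versus $x\to\infty$" it suffices to carry out part (i) in detail.

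First I would rescale. Fix $\rho\ge0$, let $L$ be the slowly varying function in the hypothesis, and for $t>0$ let $V_{t}$ be the positive Borel measure on $[0,\infty)$ obtained by pushing $dU$ forward under $s\mapsto s/t$ and normalising by $t^{\rho}L(t)$; thus $V_{t}\big((0,x]\big)=U(tx)/\big(t^{\rho}L(t)\big)$. A change of variables gives, for every $\lambda>0$,
\[
\int_{0}^{\infty}e^{-\lambda x}\,V_{t}(dx)=\frac{1}{t^{\rho}L(t)}\int_{0}^{\infty}e^{-(\lambda/t)s}\,dU(s)=\frac{w(\lambda/t)}{t^{\rho}L(t)}.
\]
Since $\lambda/t\to0^{+}$ as $t\to\infty$, the hypothesis $w(x)\sim x^{-\rho}L(1/x)$ and slow variation of $L$ yield
\[
\frac{w(\lambda/t)}{t^{\rho}L(t)}\sim\frac{(\lambda/t)^{-\rho}L(t/\lambda)}{t^{\rho}L(t)}=\lambda^{-\rho}\,\frac{L(t/\lambda)}{L(t)}\longrightarrow\lambda^{-\rho},\quad t\to\infty .
\]
Hence the Laplace transforms of the $V_{t}$ converge pointwise on $(0,\infty)$ to $\lambda\mapsto\lambda^{-\rho}$, which is exactly the Laplace transform of the Radon measure $\mu_{\rho}(dx)=\frac{x^{\rho-1}}{\Gamma(\rho)}\,dx$ for $\rho>0$ (respectively $\mu_{0}=\delta_{0}$ when $\rho=0$).

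Next I would invoke the extended continuity theorem: pointwise convergence on $(0,\infty)$ of the transforms, the limit being itself a genuine Laplace transform, forces $V_{t}\to\mu_{\rho}$ vaguely. As $\mu_{\rho}$ has no atom at any $x>0$, passing to distribution functions is legitimate, so $U(tx)/\big(t^{\rho}L(t)\big)\to\mu_{\rho}\big((0,x]\big)=x^{\rho}/\Gamma(\rho+1)$ for every $x>0$; taking $x=1$ gives $U(t)\sim t^{\rho}L(t)/\Gamma(\rho+1)$ as $t\to\infty$, which is (i). For (ii) I would run the mirror-image rescaling, replacing $t\to\infty$ by $\varepsilon\to0^{+}$: with $\widetilde V_{\varepsilon}\big((0,x]\big)=U(\varepsilon x)/\big(\varepsilon^{\rho}L(1/\varepsilon)\big)$ one finds the transform equals $w(\lambda/\varepsilon)/\big(\varepsilon^{\rho}L(1/\varepsilon)\big)$, and now $\lambda/\varepsilon\to\infty$, so the hypothesis $w(x)\sim x^{-\rho}L(x)$ again produces the pointwise limit $\lambda^{-\rho}$; the continuity theorem and $x=1$ then give $U(\varepsilon)\sim\varepsilon^{\rho}L(1/\varepsilon)/\Gamma(\rho+1)$ as $\varepsilon\to0^{+}$.

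The main obstacle is the continuity-theorem step, and this is precisely where the monotonicity of $U$ — the Tauberian hypothesis — is indispensable: positivity of $dU$ makes each $V_{t}$ a bona fide measure, and monotonicity prevents mass from leaking in the limit, so that vague convergence upgrades to convergence of the distribution functions rather than merely of smoothed averages. Concretely, the needed statement is proved by Karamata's device of sandwiching the indicator $1_{(0,x]}$ between polynomials in $e^{-s}$ via Weierstrass approximation, each polynomial instance being controlled by finitely many applications of the displayed Laplace-transform limit, with monotonicity of $U$ validating the sandwich estimates. The remaining points — the uniformity in the use of slow variation and the degenerate case $\rho=0$, where $\mu_{0}=\delta_{0}$ and one checks $U(tx)/L(t)\to1$ directly — are routine.
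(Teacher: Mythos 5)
Your proof is correct: the paper itself offers no proof of this theorem, stating it as a classical result and referring to Seneta \cite{Seneta1976} and Bingham--Goldie--Teugels \cite{Bingham1987}, and your rescaling argument via the extended continuity theorem for Laplace--Stieltjes transforms (with the limit transform $\lambda^{-\rho}$ identified as that of $x^{\rho-1}dx/\Gamma(\rho)$, resp.\ $\delta_{0}$ for $\rho=0$) is precisely the standard proof found in those sources. The one point worth keeping explicit is the role of monotonicity of $U$ in making the rescaled objects genuine measures so the continuity theorem applies at continuity points of the limit, which you do address.
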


\subsection*{Founding}
Jos{\'e} L. da Silva is a member
of the Centro de Investiga{\c c\~a}o em Matem{\'a}tica e Aplica{\c c\~o}es
(CIMA), Universidade da Madeira, a research centre supported with
Portuguese funds by FCT (Funda{\c c\~a}o para a Ci{\^e}ncia e a
Tecnologia, Portugal) through the Project UID/MAT/04674/2013.

\end{document}